	\newcommand{\ftn}[3]{ #1 : #2 \rightarrow #3 }
	\newcommand{\setof}[2]{\ensuremath{\left\{ #1 \: : \: #2 \right\}}}
	\newcommand{\Z}{\ensuremath{\mathbb{Z}}}
	\newcommand{\C}{\ensuremath{\mathbb{C}}}
	\newcommand{\N}{\ensuremath{\mathbb{N}}}
	\newcommand{\K}{\ensuremath{\mathbb{K}}}
	\newcommand{\Prim}{\operatorname{Prim}}
	\newcommand{\primT}{\Prim^\tau}
	\theoremstyle{plain}
	\newtheorem{thm}{Theorem}[section]
	\newtheorem{lemma}[thm]{Lemma}
	\newtheorem{theor}[thm]{Theorem}
	\newtheorem{propo}[thm]{Proposition}
	\newtheorem{corol}[thm]{Corollary}
	\theoremstyle{definition}
	\newtheorem{defin}[thm]{Definition}
	\newtheorem{remar}[thm]{Remark}
	\newtheorem{examp}[thm]{Example}
	\numberwithin{equation}{section}
	\numberwithin{figure}{section}
\begin{document}
	\title[Identifying AF-algebras that are graph $C^*$-algebras]{Identifying AF-algebras that are graph $\boldsymbol{C^*}$-algebras}
	
	\author{S{\o}ren Eilers}
        \address{Department of Mathematical Sciences \\
        University of Copenhagen\\
        Universitetsparken~5 \\
        DK-2100 Copenhagen, Denmark}
        \email{eilers@math.ku.dk }
        
        \author{Takeshi Katsura}
        \address{Department of Mathematics, Faculty of Science and Technology, Keio
University, 3-14-1 Hiyoshi, Kouhoku-ku, Yokohama, Japan, 223-8522.}
  \email{katsura@math.keio.ac.jp}
	
	\author{Efren Ruiz}
        \address{Department of Mathematics\\University of Hawaii,
Hilo\\200 W. Kawili St.\\
Hilo, Hawaii\\
96720-4091 USA}
        \email{ruize@hawaii.edu}
        
         \author{Mark Tomforde}
        \address{Department of Mathematics\\University of Houston\\
Houston, Texas\\
77204- 3008, USA}
        \email{tomforde@math.uh.edu}
        \date{\today}
	

	\keywords{Graph $C^{*}$-algebras, AF-algebras, Bratteli diagrams, Type I $C^*$-algebras}
	\subjclass[2010]{Primary: 46L55}

\begin{abstract}
We consider the problem of identifying exactly which AF-algebras are isomorphic to a graph $C^*$-algebra.  We prove that any separable, unital, Type~I $C^*$-algebra with finitely many ideals is isomorphic to a graph $C^*$-algebra.  This result allows us to prove that a unital AF-algebra is isomorphic to a graph $C^*$-algebra if and only if it is a Type~I $C^*$-algebra with finitely many ideals.  We also consider nonunital AF-algebras that have a largest ideal with the property that the quotient by this ideal is the only unital quotient of the AF-algebra.  We show that such an AF-algebra is isomorphic to a graph $C^*$-algebra if and only if its unital quotient is Type~I, which occurs if and only if its unital quotient is isomorphic to $\mathsf{M}_k$ for some natural number $k$.  All of these results provide vast supporting evidence for the conjecture that an AF-algebra is isomorphic to a graph $C^*$-algebra if and only if each unital quotient of the AF-algebra is Type~I with finitely many ideals, and bear relevance for the intrigiung question of finding $K$-theoretical criteria for when an extension
of two graph $C^*$-algebras is again a graph $C^*$-algebra.
\end{abstract}

\thanks{This research was supported by the Danish National Research Foundation through the Centre for Symmetry and Deformation (DNRF92).  The fourth author was supported by a grant from the Simons Foundation (\#210035 to Mark Tomforde).  The third and fourth authors thank the Centre de Recerca Matem\`atica for supporting each for a month-long visit during which portions of this research were completed.  Also, the third and fourth author thank the University of Houston for supporting a trip by the third author to visit the fourth author in Houston to work on this project.}

\maketitle

\section{Introduction}

Since the introduction of graph $C^*$-algebras in the 1990s, it has been observed that graph $C^*$-algebras contain numerous AF-algebras.  Indeed, Drinen proved that every AF-algebra is Morita equivalent to a graph $C^*$-algebra \cite{Dri}.  At the same time, it is easily seen that there are AF-algebras that are not isomorphic to any graph $C^*$-algebra.  For example, the only commutative graph $C^*$-algebras that are AF-algebras are the direct sums of complex numbers, so any commutative AF-algebra that is not isomorphic to the direct sum of copies of $\C$ (for instance, the $C^*$-algebra of continuous complex-valued functions on the Cantor set) is not isomorphic to a graph $C^*$-algebra.  This has led to the natural question of determining exactly which AF-algebras are isomorphic to graph $C^*$-algebras.

An extensive exploration of this question was undertaken by Sims together with the second and fourth named authors in \cite{kst:realization}, where they not only investigated which AF-algebras are isomorphic to graph $C^*$-algebras, but also which AF-algebras are isomorphic to Exel-Laca $C^*$-algebras, and which AF-algebras are isomorphic to ultragraph $C^*$-algebras.  A complete answer to this question for the class of graph $C^*$-algebras was not obtained in \cite{kst:realization}, although many useful partial results were deduced.  In particular, if one restricts to the class of row-finite graphs with no sinks, the question has been completely answered:  An AF-algebra is isomorphic to the $C^*$-algebra of a row-finite graph with no sinks if and only if it has no unital quotients \cite[Theorem~4.7]{kst:realization}.  In addition, an interesting necessary condition for an AF-algebra to be isomorphic to a graph $C^*$-algebra was obtained in \cite[Proposition~4.21]{kst:realization}, where it is shown that an AF graph $C^*$-algebra has the property that all of its unital quotients are Type~I $C^*$-algebras with finitely many ideals.  This naturally leads one to conjecture that the converse is true.  We state this conjecture here, and we will refer to it throughout the paper.

$ $

\noindent \textbf{Conjecture:} An AF-algebra is isomorphic to a graph $C^*$-algebra if and only if every unital quotient of the AF-algebra is a Type~I $C^*$-algebra with finitely many ideals.

$ $

As we have mentioned, \cite[Proposition~4.21]{kst:realization} establishes the ``only if" direction of the conjecture, so the open question is to determine whether the ``if" direction holds.  Also, we observe that the result in  \cite[Theorem~4.7]{kst:realization} is consistent with the conjecture, since it states that any AF-algebra with no unital quotients (which therefore vacuously satisfies the condition of the conjecture) is isomorphic to the $C^*$-algebra of a row-finite graph with no sinks.

In this paper we prove results that provide mounting evidence in support of this conjecture.  After some preliminaries in Section~\ref{prelim-sec}, we consider Type~I $C^*$-algebras in Section~\ref{unital-sec} and prove in Theorem~\ref{t:typeIgraph} that any separable, unital, Type~I $C^*$-algebra with finitely many ideals is isomorphic to a graph $C^*$-algebra.  This allows us to give a complete description of the unital AF-algebras that are isomorphic to graph $C^*$-algebras, and in Corollary~\ref{AF-conj-for-unital-cor} we prove that a unital AF-algebra is isomorphic to a graph $C^*$-algebra if and only if it is a Type~I $C^*$-algebra with finitely many ideals.  This result supports the conjecture mentioned above, since all quotients of a unital Type~I $C^*$-algebra with finitely many ideals are also unital Type~I $C^*$-algebras with finitely many ideals.  

In the remainder of the paper we consider nonunital AF-algebras that have a unital quotient.  However, this situation here is much more difficult than the unital case.  Indeed, we restrict our attention to nonunital AF-algebras that have a largest ideal (i.e., a proper ideal that contains all other ideals) with the property that the quotient by this ideal is the only unital quotient of the AF-algebra.  Studying these nonunital AF-algebras requires a subtle analysis of Bratteli diagrams, and we spend Section~\ref{Brat-sec} developing the needed technical lemmas.  In Section~\ref{nonunital-sec} we prove in Theorem~\ref{one-big-ideal-thm} that if $\mathfrak{A}$ is an AF-algebra with a largest ideal having the property that the quotient by this ideal is the only unital quotient, then $\mathfrak{A}$ is isomorphic to a graph $C^*$-algebra if and only if this unital quotient is a Type~I $C^*$-algebra, which is also equivalent to the unital quotient being isomorphic to $\mathsf{M}_k$ for some natural number $k$.  This result provides additional support for the conjecture mentioned earlier, since these AF-algebras have exactly one unital quotient, and this unital quotient is simple.  Moreover, unlike the result for unital $C^*$-algebras in Section~\ref{unital-sec}, our result in Theorem~\ref{one-big-ideal-thm} is entirely constructive, and shows exactly how to build the $C^*$-algebra from a Bratteli diagram for the AF-algebra.  

Combining our results for unital and nonunital AF-algebras, we are also able to show in Theorem~\ref{one-ideal-thm} that the conjecture from above holds for all AF-algebras with exactly one proper nonzero ideal.  Finally, we end the paper with an alternate proof of  \cite[Theorem~4.7]{kst:realization}.  The original proof in  \cite{kst:realization} shows that an AF-algebra with no unital quotients is isomorphic to a graph $C^*$-algebra in an indirect way, through the use of ultragraphs.  Our alternate proof in Theorem~\ref{t:nounitalquot} shows exactly how to construct the necessary graph from a Bratteli diagram for the AF-algebra.

The results presented here bear relevance for the intrigiung question of finding $K$-theoretical criteria for when an extension
\[
\xymatrix{
0\ar[r]&{C^*(E)}\ar[r]&{\mathfrak A}\ar[r]&{C^*(F)}\ar[r]&0}
\]
of two graph $C^*$-algebras is again a graph algebra. In the cases where one or both of $C^*(E),C^*(F)$ are purely infinite and simple, classification methods combined with range results (\cite{EKTW}, \cite{ERRfull}, \cite{EKRT}) lead to the resolution of such questions, but these methods do not apply to the case where $C^*(E)$ and $C^*(F)$ --- and then, by \cite{Brown}, also $\mathfrak A$ --- are AF. If our Conjecture holds true, this would imply that any extension of AF graph $C^*$-algebras is again an AF graph $C^*$-algebra, and our main results confirming this in key cases may be used to close the gap (cf.~\cite{EKRT}) in our present knowledge and complete the picture when both $C^*(E)$ and $C^*(F)$ are simple.

\section{Background and Preliminaries} \label{prelim-sec}

A \emph{graph} $E=(E^{0},E^{1},r,s)$ consists of a countable
set $E^{0}$ of vertices, a countable set $E^{1}$ of edges, and
maps $r \colon E^{1} \to E^{0}$ and $s \colon E^1 \to E^0$ identifying the
range and source of each edge.  A \emph{path} in a graph $E = (E^0, E^1, r, s)$ is a finite sequence
of edges $\alpha := e_1 \ldots e_n$ with $s(e_{i+1}) = r(e_i)$
for $1 \leq i \leq n-1$. We say that $\alpha$ has \emph{length}
$n$, and we write $|\alpha|$ for the length of $\alpha$.  We regard vertices as paths of length 0 and edges as
paths of length 1, and we then extend our notation for the
vertex set and the edge set by writing $E^n$ for the set of
paths of length $n$ for all $n \ge 0$. We write $E^*$ for the
set $\bigsqcup_{n=0}^\infty E^n$ of paths of finite length, and
extend the maps $r$ and $s$ to $E^*$ by setting $r(v) = s(v) =
v$ for $v \in E^0$, and $r(\alpha_1 \ldots \alpha_n) =
r(\alpha_n)$ and $s(\alpha_1\ldots\alpha_n) = s(\alpha_1)$.

If $\alpha$ and $\beta$ are elements of $E^*$ such that
$r(\alpha) = s(\beta)$, then $\alpha\beta$ is the path of
length $|\alpha|+|\beta|$ obtained by concatenating the two.
Given $\alpha, \beta \in E^*$, and a subset $X \subseteq E^*$, we
define
\[
\alpha X \beta := \{ \gamma \in E^* : \gamma = \alpha \gamma' \beta \text{ for some } \gamma' \in X \}.
\]
So when $v$ and $w$ are vertices, we have
\begin{align*}
vX &= \{\gamma \in X : s(\gamma) = v\},\\
Xw &= \{\gamma \in X : r(\gamma) = w\},\text{ and}\\
vXw &= \{ \gamma \in X : s(\gamma) = v \text{ and } r(\gamma) = w \}.
\end{align*}
In particular, $vE^1w$ denotes the set of edges from $v$ to $w$
and $|vE^1w|$ denotes the number of edges from $v$ to $w$.  Furthermore, if $V \subseteq E^0$, $W \subseteq E^0$, and $X \subseteq E^*$, we define
$$VXW := \{ \alpha \in X : s(\alpha) \in V \text{ and } r(\alpha) \in W \}.$$

We say a vertex $v$ is a \emph{sink} if $vE^1 = \emptyset$ and
an \emph{infinite emitter} if $vE^1$ is infinite. A \emph{singular vertex} is a vertex that is either a sink or an infinite emitter.  A graph is
called \emph{row-finite} if it has no infinite emitters.

\begin{defin}
If $E=(E^{0},E^{1},r,s)$ is a graph, then the \emph{graph
$C^*$-algebra} $C^{*}(E)$ is the universal $C^*$-algebra
generated by mutually orthogonal projections $\{p_{v} : v \in
E^{0} \}$ and partial isometries $\{ s_{e} : e \in E^{1} \}$
with mutually orthogonal ranges satisfying
\begin{enumerate}
\item $s_{e}^{*}s_{e} = p_{r(e)}$ \ \ for all $e \in E^{1}$
\item $p_{v}=\displaystyle  {\ \sum_{e \in vE^1} s_{e}
   s_{e}^*}$ \ \ for all $v\in E^{0}$ such that
   $0<|vE^1|<\infty$
\item $s_{e}s_{e}^{*} \leq p_{s(e)}$ \ \ for all $e \in
   E^{1}$.
\end{enumerate}
\end{defin}

We write $v \geq w$ to mean that there is a path $\alpha \in
E^*$ such that $s(\alpha) = v$ and $r(\alpha) = w$.
A \emph{cycle} in a graph $E$ is a path $\alpha \in E^*$ of
nonzero length with $r(\alpha) = s(\alpha)$.  A graph is called \emph{acyclic} if it has no cycles.
A graph $C^*$-algebra $C^*(E)$ is an AF-algebra
if and only if $E$ is acyclic \cite[Theorem~2.4]{KPR}.

\begin{defin}
If $E=(E^{0},E^{1},r,s)$ is a graph, a subset $H \subseteq E^0$ is called \emph{hereditary} if whenever $e \in E^1$ and $s(e) \in H$, then $r(e) \in H$.  A hereditary set $H$ is called \emph{saturated} if whenever $v \in E^0$ is a vertex that is neither a sink nor an infinite emitter, then $r (s^{-1}(v)) \subseteq H$ implies $v \in H$.

If $H$ is a saturated hereditary subset of $E^0$, we define the graph $E_H := (E_H^0, E_H^1, r_{E_H}, s_{E_H})$ as follows:  The vertex set is $E^0 := E^0 \setminus H$, the edge set is $E^1 := s^{-1}(H)$, and the range and source maps are $r_{E_H} := r|_{E_H^1}$ and $s_{E_H} := s|_{E_H^1}$, which are obtained by restricting $r$ and $s$ to $E_H^1$.
\end{defin}

\section{Unital Type~I $C^{*}$-algebras and unital AF-algebras} \label{unital-sec}

In this section, we prove that any unital separable Type~I $C^{*}$-algebra with finitely many ideals is isomorphic to a graph $C^{*}$-algebra.  We obtain this result in two steps:  First, we show that any unital, separable, Type~I $C^{*}$-algebra with finitely many ideals is stably isomorphic to a the $C^{*}$-algebra of an amplified graph with finitely many vertices (see Definition~\ref{d:amplified} and Proposition~\ref{p:stableiso}).  Second, we show that if $\overline{G}$ is an acyclic amplified graph with a finite number of vertices, then any full unital corner of the stabilization $C^*(\overline{G}) \otimes \K$ is isomorphic to a graph $C^*$-algebra (see Proposition~\ref{p:corners}).  To do this it will be convenient for us to apply theorems for $C^*$-algebras classified by their tempered primitive ideal space, so we begin by establishing the necessary terminology and preliminary results.

\subsection[The tempered primitive ideal space of a $C^*$-algebra]{The tempered primitive ideal space of a $\boldsymbol{C^*}$-algebra}

Let $X$ be a topological space and let $\mathbb{O}( X)$ be the set of open subsets of $X$ partially ordered by set inclusion.  A subset $Y \subseteq X$ is called \emph{locally closed} if $Y = U \setminus V$ where $U, V \in \mathbb{O} ( X )$ and $V \subseteq U$.  The set of all locally closed subsets of $X$ will be denoted by $\mathbb{LC}(X)$.

The partially ordered set $( \mathbb{O} ( X ) , \subseteq )$ is a lattice with meet and join given by $Y_1 \wedge Y_2 := Y_1 \cap Y_2$ and $Y_1 \vee Y_2 := Y_1 \cup Y_2$, respectively.  For a $C^{*}$-algebra $\mathfrak{A}$, we let $\mathbb{I} ( \mathfrak{A} )$ denote the set of closed ideals of $\mathfrak{A}$.  The partially ordered set $( \mathbb{I} ( \mathfrak{A} ), \subseteq )$ is also a lattice with meet and join given by $I_1 \wedge I_2 := I_1 \cap I_2$ and $I_1 \vee I_2 := \overline{I_1 + I_2}$.  If $\mathfrak{A}$ is a $C^{*}$-algebra, we let $\mathrm{Prim} ( \mathfrak{A} )$ denote the \emph{primitive ideal space} of $\mathfrak{A}$ equipped with the usual hull-kernel topology.  For any $C^*$-algebra $\mathfrak{A}$, the lattices $\mathbb{O} ( \mathrm{Prim} ( \mathfrak{A} ) )$ and $\mathbb{I} ( \mathfrak{A} )$ are isomorphic via the lattice isomorphism
\begin{equation*}
U \mapsto \bigcap_{ \mathfrak{p} \in \mathrm{Prim} ( \mathfrak{A} ) \setminus U } \mathfrak{p}
\end{equation*}
We shall frequently identify $\mathbb{O} ( \mathrm{Prim} ( \mathfrak{A} ))$ and $\mathbb{I} ( \mathfrak{A} )$ in this way.

\begin{defin}
Let $X$ be a topological space.  A \emph{$C^{*}$-algebra over $X$} is a pair $( \mathfrak{A} , \psi )$ consisting of a $C^{*}$-algebra $\mathfrak{A}$ and a continuous map $\ftn{ \psi }{ \mathrm{Prim} ( \mathfrak{A} ) }{ X }$.  
\end{defin}

If $( \mathfrak{A} , \psi )$ is a $C^{*}$-algebra over $X$, we have a map $\ftn{ \psi^{*} }{ \mathbb{O} ( X ) }{ \mathbb{O} ( \mathrm{Prim} ( \mathfrak{A} ) ) }$ defined by
\begin{equation*}
U \mapsto \setof{ \mathfrak{p} \in \mathrm{Prim} ( \mathfrak{A} ) }{ \psi ( \mathfrak{p} ) \in U }.
\end{equation*}
Using the isomorphism $\mathbb{O} ( \mathrm{Prim} ( \mathfrak{A} ) ) \cong \mathbb{I} ( \mathfrak{A} )$, we obtain a map from $\mathbb{O}(X)$ to $\mathbb{I}(\mathfrak{A} )$ given by $U \mapsto \mathfrak{A}[U]$, where
$$
\mathfrak{A}[U] := \bigcap \setof{ \mathfrak{p} \in \mathrm{Prim} ( \mathfrak{A} ) }{ \psi ( \mathfrak{p} ) \notin U }.
$$
If $Y \in \mathbb{LC} ( X )$, we may write $Y = U \setminus V$ for open sets $U, V \subseteq X$ with $V \subseteq U$, and we define $\mathfrak{A}[Y] := \mathfrak{A} [U] / \mathfrak{A}[V]$.   It follows from \cite[Lemma~2.15]{rmrn:bootstrap} that $\mathfrak{A} [Y]$ is independent of the choice of $U$ and $V$.

\begin{remar}
Any $C^*$-algebra $\mathfrak{A}$ can be viewed as a $C^*$-algebra over $\mathrm{Prim} ( \mathfrak{A} )$ by taking $\psi := \textrm{id} : \mathrm{Prim} ( \mathfrak{A} ) \to \mathrm{Prim} ( \mathfrak{A} )$.  In this case we shall simply write $\mathfrak{A}$ in place of $(\mathfrak{A}, \textrm{id})$.
\end{remar}

\begin{defin}[The Tempered Primitive Ideal Space]
Let $\mathfrak{A}$ be a $C^*$-algebra, and view $\mathfrak{A}$ as a $C^*$-algebra over $\mathrm{Prim} ( \mathfrak{A} )$.  Define $\ftn{ \tau_{ \mathfrak{A} } }{ \mathrm{Prim} (\mathfrak{A})  }{ \Z \cup \{ -\infty, \infty \} }$ by 
\[
\tau_{ \mathfrak{A} } ( x ) :=
\begin{cases}
 - \mathrm{rank} ( K_0(\mathfrak{A} [ x ]) ) &\text{if $K_{0} ( \mathfrak{A}  [ x ] )_{+} \neq K_{0} ( \mathfrak{A} [x] )$} \\
  \mathrm{rank} ( K_0(\mathfrak{A} [ x ]) ) &\text{if  $K_{0} ( \mathfrak{A}  [ x ] )_{+} = K_{0} ( \mathfrak{A} [x] )$}.
\end{cases}
\]
The \emph{tempered primitive ideal space} of $\mathfrak{A}$ is defined to be the pair $$\primT ( \mathfrak{A} ) := 
( \mathrm{Prim} ( \mathfrak{A} ), \tau_{ \mathfrak{A} } ).$$
If $\mathfrak{A}$ and $\mathfrak{B}$ are $C^*$-algebras, we say that $\primT ( \mathfrak{A} )$ and $\primT ( \mathfrak{B} ) $ are  \emph{isomorphic}, denoted by $\primT ( \mathfrak{A} )  \cong \primT ( \mathfrak{A} )$, if there exists a homeomorphism $\ftn{ \alpha }{ \mathrm{Prim} (\mathfrak{A})  }{ \mathrm{Prim} (\mathfrak{B})  }$ such that $\tau_{ \mathfrak{B} } \circ \alpha = \tau_{ \mathfrak{A} }$.
\end{defin}

\begin{defin}[Definition~6.1 of \cite{seaser:amplified}]\label{d:freeclass}
Let $\mathcal{C}$ be the class of separable, nuclear, simple, purely infinite $C^{*}$-algebras  satisfying the UCT that have free $K_0$-group and zero $K_1$-group.

Let $\mathcal{C}_{ \mathrm{free} }$ be class of all $C^{*}$-algebras $\mathfrak{A}$ satisfying all of the following four properties:
\begin{itemize}
\item[(1)] $\mathrm{Prim} ( \mathfrak{A} )$ is finite.
\item[(2)] For each $x \in \mathrm{Prim} ( \mathfrak{A} )$, the subquotient $\mathfrak{A} [ x  ]$ is either unital or  stable.
\item[(3)] For each $x \in \mathrm{Prim} ( \mathfrak{A} )$, the subquotient $\mathfrak{A} [ x  ]$ is either in $\mathcal{C}$ or stably isomorphic to $\K$. 
\item[(4)] For each $x \in \mathrm{Prim} ( \mathfrak{A} )$, if the subquotient $\mathfrak{A} [ x  ]$ is unital, then there exists an isomorphism from $K_{0} ( \mathfrak{A} [ x ] )$ onto $\bigoplus_{ n } \Z$ that takes $[ 1_{ \mathfrak{A} [ x ] } ]$ to an element of the form $( 1, \lambda ) \in \bigoplus_{ n } \Z$ for some $\lambda$. 
\end{itemize}
\end{defin}

It turns out that $C^{*}$-algebras in $\mathcal{C}_{ \mathrm{free}}$ are classified up to stable isomorphism by the tempered primitive ideal space.

\begin{theor}(Theorem 6.17 of \cite{seaser:amplified})\label{t:amplifiedfree}
If $\mathfrak{A}, \mathfrak{B} \in \mathcal{C}_{ \mathrm{free} }$, then $\mathfrak{A} \otimes \K \cong \mathfrak{B} \otimes \K$  if and only if $\primT ( \mathfrak{A} )  \cong \primT ( \mathfrak{B} )$.
\end{theor}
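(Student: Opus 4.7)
The forward direction is the routine half. A stable isomorphism $\mathfrak{A}\otimes\K\cong\mathfrak{B}\otimes\K$ induces a homeomorphism $\alpha\colon\mathrm{Prim}(\mathfrak{A})\to\mathrm{Prim}(\mathfrak{B})$, and because every subquotient $\mathfrak{A}[x]$ is stably isomorphic to $\mathfrak{B}[\alpha(x)]$, the $K_0$-groups have the same rank and the same positivity behavior (the latter because $K_0$ of a $\sigma$-unital stable $C^*$-algebra is generated by its positive cone precisely when that holds for any nonzero corner). Hence $\tau_\mathfrak{B}\circ\alpha=\tau_\mathfrak{A}$, so $\primT(\mathfrak{A})\cong\primT(\mathfrak{B})$.

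For the converse I would proceed in three stages. First, I would use the homeomorphism $\alpha$ to identify the ideal lattices $\mathbb{I}(\mathfrak{A})$ and $\mathbb{I}(\mathfrak{B})$ and, for each $x\in\mathrm{Prim}(\mathfrak{A})$, classify the subquotients $\mathfrak{A}[x]$ and $\mathfrak{B}[\alpha(x)]$ up to stable isomorphism. The sign of $\tau$ records exactly whether the subquotient is AF (in which case it is stably isomorphic to $\K$, since this is the only option in $\mathcal{C}_{\mathrm{free}}$ where $K_0$ equals its positive cone and the subquotient is simple) or belongs to $\mathcal{C}$, and $|\tau|$ records the rank of the free abelian group $K_0$. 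Kirchberg--Phillips for UCT Kirchberg algebras then gives a stable isomorphism of each simple subquotient, because $K_1$ is zero and $K_0$ is determined by its rank.

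Second, I would promote these simple-subquotient identifications to an equivariant isomorphism of the whole algebras. The natural tool is Kirchberg's classification of $\mathcal{O}_\infty$-absorbing $C^*$-algebras over a finite space, which classifies (separable, nuclear, tight, $\mathcal{O}_\infty$-absorbing) $C^*$-algebras over $X$ by ideal-related $KK$-theory $\kk(X;-,-)$. To apply it I would tensor both $\mathfrak{A}$ and $\mathfrak{B}$ with $\mathcal{O}_\infty$ (the AF subquotients absorb $\mathcal{O}_\infty$ after tensoring, producing Kirchberg-type subquotients with the same $K_0$), verify that the matching of tempered primitive ideal spaces lifts to a $KK(X)$-equivalence—this is where the UCT and the freeness/vanishing of the $K$-groups make all $\mathrm{Ext}$ and higher obstruction terms vanish—and then descend back.

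The main obstacle is precisely this lifting step: producing a $KK(X)$-equivalence from the bare invariant $\primT$. The freeness assumption on $K_0$ in $\mathcal{C}_{\mathrm{free}}$ is what makes this feasible, since the associated universal coefficient spectral sequence for filtrated $K$-theory collapses and the only data one has to realize is the rank function together with the maps between subquotients induced by the lattice structure. Carrying this out rigorously would mean either invoking the Meyer--Nest / Bentmann--K\"ohler filtrated $K$-theory machinery for finite $T_0$-spaces, or, alternatively, building the equivalence by induction on the height of the ideal lattice using six-term sequences together with the vanishing of $\mathrm{Ext}^1(\Z^n,\Z^m)$. Once the $KK(X)$-equivalence is in hand, Kirchberg's theorem delivers the stable isomorphism after tensoring with $\mathcal{O}_\infty$, and a separate argument (using that the difference between $\mathfrak{A}$ and $\mathfrak{A}\otimes\mathcal{O}_\infty$ is controlled by the AF subquotients, which become $\mathcal{O}_\infty\otimes\K\cong\K$ only after accounting for the purely infinite case) is needed to return to $\mathfrak{A}\otimes\K\cong\mathfrak{B}\otimes\K$; this last step is the technical heart of the cited proof in \cite{seaser:amplified}.
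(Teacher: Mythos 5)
You should first note that the paper does not prove this statement at all: it is imported verbatim as Theorem~6.17 of \cite{seaser:amplified}, so the only question is whether your sketch would itself constitute a proof. Your forward direction is fine in outline (a stable isomorphism induces a homeomorphism of primitive ideal spaces and identifies the subquotients up to stable isomorphism, hence preserves $\tau$), but you have the sign convention of $\tau$ backwards: for a purely infinite simple $C^*$-algebra every $K_0$-class is positive, so $K_{0}(\mathfrak{A}[x])_{+}=K_{0}(\mathfrak{A}[x])$ holds exactly for the subquotients in $\mathcal{C}$, whereas $K_0(\K)_+=\Z_{\geq 0}\subsetneq \Z$; thus the negative values of $\tau$ mark the subquotients stably isomorphic to $\K$, not the ones in $\mathcal{C}$. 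Likewise the parenthetical suggestion that the AF subquotients ``become $\mathcal{O}_\infty\otimes\K\cong\K$'' is false: $\mathcal{O}_\infty\otimes\K$ is purely infinite and not isomorphic to $\K$.

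The genuine gap is in the structure of your converse. Tensoring with $\mathcal{O}_\infty$ destroys precisely the information that the sign of $\tau$ records. If $P\in\mathcal{C}$ has $K_0\cong\Z$ and $K_1=0$, then $\K$ and $P\otimes\K$ are $KK$-equivalent (both are $KK$-equivalent to $\C$), they become isomorphic after tensoring with $\mathcal{O}_\infty\otimes\K$, yet they are not stably isomorphic and have different tempered primitive ideal spaces. Consequently, ideal-related $KK$-theory (or filtrated $K$-theory) of $\mathfrak{A}\otimes\mathcal{O}_\infty$ cannot remember $\mathfrak{A}$ up to stable isomorphism, and there is no ``descent'' from $\mathfrak{A}\otimes\mathcal{O}_\infty\otimes\K\cong\mathfrak{B}\otimes\mathcal{O}_\infty\otimes\K$ back to $\mathfrak{A}\otimes\K\cong\mathfrak{B}\otimes\K$: already the simple case $\C$ versus $\mathcal{O}_\infty$ shows such a descent fails. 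So your plan, as structured, proves only the $\mathcal{O}_\infty$-absorbed statement, which is strictly weaker and would be true even without the temperedness data; the final step you defer to ``a separate argument'' is not a technicality but an impossibility along this route, because the needed order/type information was discarded at the first step. A correct proof must carry the positivity data (equivalently, which subquotients are stably $\K$ versus Kirchberg) through the classification itself --- for instance by classifying the mixed extensions directly with ordered, ideal-related invariants and fullness hypotheses in the style of Eilers--Restorff--Ruiz, which is the route taken in \cite{seaser:amplified} --- rather than first passing to $\mathcal{O}_\infty$-absorbing algebras.
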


\subsection[Realizing unital Type~I  $C^{*}$-algebras as graph $C^*$-algebras]{Realizing unital Type~I  $\boldsymbol{C^{*}}$-algebras as graph $\boldsymbol{C^*}$-algebras}

\begin{defin}\label{d:amplified}
If $E = (E^0, E^1, r, s)$ is graph, we say that $E$ is an \emph{amplified graph} if for all $e \in E^1$ the number of edges from $s(e)$ to $r(e)$ is countably infinite.

If $G = (G^0, G^1, r, s)$ is a graph, the \emph{amplification of $G$} is defined to be the graph $\overline{G} = (\overline{G}^0, \overline{G}^1, r_{\overline{G}}, s_{\overline{G}})$ defined by $\overline{G}^{0} := G^{0}$, 
\begin{align*}
\overline{G}^{1} := \setof{ e(v,w)^{n} }{ \text{$v, w \in G^{0}$, $n \in \N$, and there exists an edge from $v$ to $w$} }, 
\end{align*}
$s_{ \overline{G} } ( e(v,w)^{n} ) := v$, and $r_{\overline{G}} ( e(v,w)^{n} ) := w$. 

Note that a graph $E$ is an amplified graph if and only if $E = \overline{G}$ for some graph $G$, and in this case the graph $G$ may always be chosen to be row-finite with the same number of vertices as $E$.  Because of this, we shall often write an amplified graph as $\overline{G}$ for a row-finite graph $G$.
\end{defin}

\begin{lemma}\label{l:type1subq}
If $\mathfrak{A}$ is a separable Type~I $C^{*}$-algebra, then every simple subquotient of $\mathfrak{A}$ is isomorphic to either $\K$ or $\mathsf{M}_{n}$ for some $n \in \N$.  If, in addition, $\mathfrak{A}$ has finitely many ideals, then $\mathfrak{A} \otimes \K \in \mathcal{C}_{ \mathrm{free} }$.
\end{lemma}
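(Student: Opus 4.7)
My plan is to handle the two assertions separately: first I would prove that every simple subquotient is $\K$ or $\mathsf{M}_n$, and then use this directly to check the four clauses of Definition~\ref{d:freeclass}.

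For the first assertion, I would appeal to the fact that the class of Type~I $C^{*}$-algebras is closed under passing to closed ideals and to quotients, hence also under taking subquotients. Therefore any simple subquotient $S$ of $\mathfrak{A}$ is itself a separable, simple, Type~I $C^{*}$-algebra. The classical structure theorem for simple Type~I $C^{*}$-algebras (see, e.g., Pedersen, \emph{$C^*$-Algebras and their Automorphism Groups}) then yields $S \cong \K(H)$ for some Hilbert space $H$, and separability forces $H$ to be either finite-dimensional or separable infinite-dimensional, so $S \cong \mathsf{M}_n$ or $S \cong \K$, as desired.

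For the second assertion, I would verify properties (1)--(4) in turn for $\mathfrak{A} \otimes \K$. For (1), the rigidity of ideal lattices under tensoring with the nuclear simple $C^{*}$-algebra $\K$ gives a bijection $I \mapsto I \otimes \K$ between closed ideals of $\mathfrak{A}$ and closed ideals of $\mathfrak{A} \otimes \K$, inducing a homeomorphism $\mathrm{Prim}(\mathfrak{A} \otimes \K) \cong \mathrm{Prim}(\mathfrak{A})$; since $\mathfrak{A}$ has finitely many ideals by hypothesis, $\mathrm{Prim}(\mathfrak{A} \otimes \K)$ is finite. Using the same correspondence together with the identification $(\mathfrak{A}/I) \otimes \K \cong (\mathfrak{A} \otimes \K)/(I \otimes \K)$, one checks that for each $x \in \mathrm{Prim}(\mathfrak{A} \otimes \K)$,
\[
(\mathfrak{A} \otimes \K)[x] \cong \mathfrak{A}[x] \otimes \K.
\]
Here $\mathfrak{A}[x]$ is a simple subquotient of the separable Type~I algebra $\mathfrak{A}$, so by the first assertion it is $\K$ or $\mathsf{M}_n$; in either case $(\mathfrak{A} \otimes \K)[x] \cong \K$. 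This simultaneously establishes (2) (every subquotient is stable) and (3) (every subquotient is stably isomorphic to $\K$), while (4) holds vacuously because no subquotient is unital.

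The two routine but essential technicalities I expect to be the only real friction are the structure theorem for simple Type~I algebras in the first part and the identification of ideals and subquotients of $\mathfrak{A} \otimes \K$ with those of $\mathfrak{A}$ tensored with $\K$ in the second part; both are standard, but I would state them explicitly (with references) rather than leave them implicit, since the rest of the argument is essentially a bookkeeping check against Definition~\ref{d:freeclass}.
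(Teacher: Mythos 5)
Your proposal is correct and follows essentially the same route as the paper: reduce to the fact that simple separable Type~I $C^{*}$-algebras are $\K$ or $\mathsf{M}_n$, then observe that every $(\mathfrak{A}\otimes\K)[x]$ is therefore $\K$, so Properties (2)--(4) of Definition~\ref{d:freeclass} hold (with (4) vacuous) and finiteness of the ideal lattice gives Property~(1). The only difference is that you spell out the ideal correspondence under tensoring with $\K$, which the paper leaves implicit.
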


\begin{proof}
Since $\mathfrak{A}$ is a separable Type~I $C^{*}$-algebra, every simple subquotient of $\mathfrak{A}$ is a separable Type~I $C^{*}$-algebra, and hence isomorphic to either $\K$ or $\mathsf{M}_{n}$ for some $n \in \N$. Moreover, since every simple subquotient of $\mathfrak{A}$ is isomorphic to $\K$ or $\mathsf{M}_{n}$ for some $n \in \N$, we have that $(\mathfrak{A} \otimes \K)[x]$ is isomorphic to $\K$ for all $x \in \Prim ( \mathfrak{A} \otimes \K )$. Thus Property~2, Property~3, and Property~4 of Definition~\ref{d:freeclass} hold for $\mathfrak{A} \otimes \K$ (with Property~4 holding vacuously).  In addition, if  $\mathfrak{A}$ has finitely many ideals, then $\mathfrak{A} \otimes \K$ has finitely many ideals, so that $\mathrm{Prim} ( \mathfrak{A} )$ is finite, Property~1 of Definition \ref{d:freeclass} is satisfied by $\mathfrak{A} \otimes \K$, and $\mathfrak{A} \otimes \K \in \mathcal{C}_{ \mathrm{free} }$.
\end{proof}

\begin{propo}\label{p:stableiso}
If $\mathfrak{A}$ is a unital separable Type~I $C^{*}$-algebra with finitely many ideals, then there exists a finite graph $G$ such that $\mathfrak{A} \otimes \K \cong C^{*} ( \overline{G} ) \otimes \K$.
\end{propo}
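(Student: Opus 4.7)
The plan is to reduce Proposition~\ref{p:stableiso} to the classification theorem, Theorem~\ref{t:amplifiedfree}, for the class $\mathcal C_{\mathrm{free}}$. Lemma~\ref{l:type1subq} already places $\mathfrak A\otimes\K$ in $\mathcal C_{\mathrm{free}}$, so the task is to construct a finite graph $G$ such that $C^*(\overline G)\otimes\K$ also lies in $\mathcal C_{\mathrm{free}}$ and has the same tempered primitive ideal space. Since $\mathfrak A$ is Type~I, Lemma~\ref{l:type1subq} combined with $\mathsf M_n\otimes\K\cong\K$ gives $(\mathfrak A\otimes\K)[x]\cong\K$ for every $x\in\Prim(\mathfrak A)$, and hence $\tau_{\mathfrak A\otimes\K}\equiv -1$. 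Thus $\primT(\mathfrak A\otimes\K)$ amounts to the finite $T_0$-space $\Prim(\mathfrak A)$ equipped with the constant function $-1$.

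To construct $G$, I would take $G^0:=\Prim(\mathfrak A)$, and for distinct $x,y\in G^0$ insert a single edge from $x$ to $y$ exactly when $y\in\overline{\{x\}}$ in the hull--kernel topology (equivalently, when $\mathfrak p_x\subseteq\mathfrak p_y$). Then $G$ is finite and, because hull--kernel specialization is antisymmetric on primitive ideals, also acyclic. The amplification $\overline G$ is therefore an acyclic amplified graph whose reachability partial order on $\overline G^0=\Prim(\mathfrak A)$ coincides with the specialization order pulled back from $\Prim(\mathfrak A)$. Standard facts about the ideal structure of amplified graph $C^*$-algebras with finitely many vertices (as used in \cite{seaser:amplified}) identify $\Prim(C^*(\overline G))$ with $G^0$ carrying the Alexandrov topology determined by this reachability order, which by construction reproduces the original topology on $\Prim(\mathfrak A)$. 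Acyclicity, together with amplification of every nonempty edge set, forces each simple subquotient of $C^*(\overline G)\otimes\K$ to be $\cong\K$, so that $C^*(\overline G)\otimes\K$ lies in $\mathcal C_{\mathrm{free}}$ with $\tau\equiv -1$. Hence $\primT(C^*(\overline G)\otimes\K)\cong\primT(\mathfrak A\otimes\K)$, and Theorem~\ref{t:amplifiedfree} yields $\mathfrak A\otimes\K\cong C^*(\overline G)\otimes\K$.

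The delicate point will be the topological identification of $\Prim(C^*(\overline G))$ with $\Prim(\mathfrak A)$, rather than merely a set-theoretic one: the direction of the edges inserted in $G$ must correspond correctly to the direction of ideal containment in $C^*(\overline G)$, so that the Alexandrov topology coming out of $\overline G$ reproduces the hull--kernel topology on $\Prim(\mathfrak A)$ and not its opposite. Once the orientation of this correspondence is pinned down, the remaining verifications are direct given Lemma~\ref{l:type1subq} and Theorem~\ref{t:amplifiedfree}.
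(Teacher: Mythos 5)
Your route differs from the paper's at exactly one point: where the paper simply invokes the range theorem \cite[Theorem~7.3]{seaser:amplified} to produce a finite graph $G$ with $\primT(C^*(\overline G)\otimes\K)\cong\primT(\mathfrak A\otimes\K)$, you propose to build $G$ by hand from the specialization order on $\Prim(\mathfrak A)$ and verify the identification directly. In principle this is a viable alternative: for a finite acyclic amplified graph every vertex is singular and there are no breaking vertices, so the primitive ideals are indexed by the maximal tails, and every maximal tail (being finite, upward closed and downward directed) equals $M_v=\{w: w\geq v\}$ for a unique vertex $v$. This is the ``standard fact'' you appeal to, and it is precisely the work the paper outsources to \cite{seaser:amplified}.

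However, the construction as you state it fails, because your edge orientation is backwards. Writing $P_v$ for the primitive ideal attached to the tail $M_v$ (namely the ideal of the saturated hereditary set $\overline G^0\setminus M_v$), one has $P_v\subseteq P_w$ if and only if $w\geq v$ in $\overline G$; so with your convention (edge $x\to y$ when $\mathfrak p_x\subseteq\mathfrak p_y$) the map $\mathfrak p_x\mapsto P_x$ is order-\emph{reversing}, and $\Prim(C^*(\overline G))$ comes out anti-homeomorphic, not homeomorphic, to $\Prim(\mathfrak A)$ in general. Concretely, let $\mathfrak A=\{(a,b)\in\widetilde{\K}\oplus\widetilde{\K}: a,b \text{ have the same scalar part}\}$, a unital separable Type~I AF-algebra with five ideals; its primitive ideals are $\mathfrak p_1=0\oplus\K$, $\mathfrak p_2=\K\oplus 0$, $\mathfrak p_3=\K\oplus\K$, with $\mathfrak p_1,\mathfrak p_2\subseteq\mathfrak p_3$ and $\mathfrak p_1\cap\mathfrak p_2=0$, so $\Prim(\mathfrak A)$ has no dense point. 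Your recipe yields the amplified graph $1\to 3\leftarrow 2$, whose whole vertex set is a maximal tail (it is downward directed through the vertex $3$); hence $0$ is a primitive ideal of $C^*(\overline G)$ and $\Prim(C^*(\overline G))$ has a dense point --- equivalently, $C^*(\overline G)$ has a smallest nonzero ideal, while $\mathfrak A$ has two nonzero ideals intersecting in $0$ --- so $\mathfrak A\otimes\K\not\cong C^*(\overline G)\otimes\K$ for this choice. You do flag the orientation as the delicate point, and the repair is exactly to reverse it: insert an edge from $x$ to $y$ precisely when $\mathfrak p_y\subseteq\mathfrak p_x$ (equivalently $x\in\overline{\{y\}}$). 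With that convention $v\mapsto P_v$ is an order isomorphism, hence a homeomorphism of the finite $T_0$ spaces, and the remaining steps (stabilized subquotients isomorphic to $\K$, $\tau\equiv -1$, membership in $\mathcal C_{\mathrm{free}}$, then Theorem~\ref{t:amplifiedfree}) go through as in the paper.
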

       
\begin{proof}
Lemma~\ref{l:type1subq} shows that $\mathfrak{A} \otimes \K \in \mathcal{C}_{ \mathrm{free} }$ and every simple subquotient of $\mathfrak{A} \otimes \K$ is isomorphic to $\K$.  Since every simple subquotient of $\mathfrak{A} \otimes \K$ has $K_0$-group isomorphic to $\Z$ and $\mathfrak{A} \otimes \K$ has finitely many ideals, it follows that $$K_{0} ( \mathfrak{A} \otimes \K ) \cong \bigoplus_{ x \in \Prim ( \mathfrak{A} \otimes \K ) } K_{0 } ( (\mathfrak{A} \otimes \K ) [ x ] ) \cong  \bigoplus_{ x \in \Prim ( \mathfrak{A} \otimes \K ) } \Z.$$  For any $x \in \Prim ( \mathfrak{A} \otimes \K )$ we have $( \mathfrak{A} \otimes \K ) [ x ] \cong \K$ and hence $\tau_{ \mathfrak{A} \otimes \K } (x) = \{ -1 \} \subseteq \{ -1 \} \cup \N$.  Therefore \cite[Theorem~7.3]{seaser:amplified} shows there exists a finite graph $G$ such that 
\begin{equation} \label{same-prim-eq}
\primT ( \mathfrak{A} \otimes \K ) \cong \primT ( C^{*} ( \overline{G} ) \otimes \K ).
\end{equation}  
Since $\overline{G}$ is an amplified graph, every vertex of $\overline{G}$ is a singular vertex and $\overline{G}$ has no breaking vertices.  It follows from \cite[Proposition~6.10]{seaser:amplified} that $C^{*} ( \overline{G} ) \otimes \K \in \mathcal{C}_{ \mathrm{free} }$.  In addition, Lemma~\ref{l:type1subq} implies $\mathfrak{A} \otimes \K \in \mathcal{C}_{ \mathrm{free} }$.  By Theorem~\ref{t:amplifiedfree}, \eqref{same-prim-eq} implies $\mathfrak{A} \otimes \K \cong C^{*} ( \overline{G} ) \otimes \K$.
\end{proof}

Proposition~\ref{p:stableiso} shows to establish that $\mathfrak{A}$ is isomorphic to a graph $C^{*}$-algebra, it suffices to show any full unital corner of $C^{*} ( \overline{G} ) \otimes \K$ is isomorphic to a graph $C^{*}$-algebra.  We shall accomplish this by examining the range of the order unit of $C^{*} (\overline{G})$.  A more systematic study of hereditary subalgebras of graph $C^{*}$-algebras will appear in work in preparation by the third named author together with Sara Arklint and James Gabe \cite{sajger:hersubalg}. 

\begin{lemma}\label{l:isounit}
Let $\overline{G}$ be an acyclic amplified graph with a finite number of vertices, and let $S = \setof{ v \in \overline{G}^{0} }{ \text{$v$ is a source in $\overline{G}$}}$.  Let $H \subseteq \bigoplus_{ v \in \overline{G}^{0} } \Z$ be the monoid generated by 
\begin{align*}
\setof{ \delta_{v} }{ v \in \overline{G}^{0} } \cup \setof{ \delta_{v} - \sum_{ e \in T } \delta_{ r(e) } }{\text{$v \in \overline{G}^0_\textnormal{inf}$ and $T$ is a finite subset of $s^{-1}(v)$}}.
\end{align*}
If $( n_{v} )_{ v \in \overline{G}^{0} } \in H$ with $n_{v} \geq 1$ for all $v \in S$, then there exists $( m_{v} )_{ v \in \overline{G}^{0} } \in H$ with $m_{v} \geq 1$ for all $v \in \overline{G}^{0}$ and there exists an isomorphism $\ftn{ \alpha }{ \bigoplus_{ v \in \overline{G}^{0} } \Z }{ \bigoplus_{ v \in \overline{G}^{0} } \Z }$ such that $\alpha ( H ) = H$ and $\alpha \left( ( n_{v} )_{v \in \overline{G}^{0} } \right) = ( m_{v} )_{v \in \overline{G}^{0} }$.
\end{lemma}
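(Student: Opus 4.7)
The plan is to realize $\alpha$ as a composition of shear automorphisms of $\bigoplus_{v \in \overline{G}^0}\Z$ that each preserve $H$, applied according to a topological ordering of the vertices so that negative coordinates are corrected one at a time without disturbing coordinates that have already been made positive. Since $\overline{G}$ is finite and acyclic, I first enumerate $\overline{G}^0 = \{v_1, \ldots, v_N\}$ so that every edge $v_a \to v_b$ satisfies $a < b$, with the source vertices listed first: $S = \{v_1, \ldots, v_s\}$. The key auxiliary claim is the following: whenever $a < b$, $v_a \geq v_b$ in $\overline{G}$, and $k \geq 0$, the linear automorphism $\alpha_{a,b,k}$ of $\bigoplus_{v \in \overline{G}^0} \Z$ defined by $\alpha_{a,b,k}(\delta_{v_a}) = \delta_{v_a} + k\delta_{v_b}$ and $\alpha_{a,b,k}(\delta_u) = \delta_u$ for $u \neq v_a$ satisfies $\alpha_{a,b,k}(H) = H$.

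To prove this auxiliary claim one checks that $\alpha_{a,b,k}$ and $\alpha_{a,b,k}^{-1}$ carry each monoid generator of $H$ into $H$. The images of the generators $\delta_u$ are visibly in $H$; the substantive case is $g = \delta_u - \sum_{e \in T}\delta_{r(e)}$ for $u$ a non-sink and $T$ a finite subset of $s^{-1}(u)$, where the shear produces a correction term of the form $\pm k \ell\, \delta_{v_b}$ for some non-negative integer $\ell$. Two features of the acyclic amplified setting allow me to absorb this correction as a non-negative combination of generators: first, the amplification hypothesis ensures that any two adjacent vertices are joined by infinitely many edges, so the finite set $T$ in a generator can be freely enlarged along an existing edge; second, choosing a path $v_a = u_0 \to u_1 \to \cdots \to u_\ell = v_b$ supplied by $v_a \geq v_b$, the telescoping identity
\[
\delta_{v_a} - \delta_{v_b} \;=\; \sum_{i=0}^{\ell-1}\bigl(\delta_{u_i} - \delta_{u_{i+1}}\bigr)
\]
together with its positive integer multiples exhibits any $\delta_{v_a} - K\delta_{v_b}$ as a sum of generators; the argument for $\alpha_{a,b,k}^{-1}$ is parallel.

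Assuming the auxiliary claim, the induction producing $\alpha$ is straightforward. For $j \leq s$, the vertex $v_j$ is a source and $n_{v_j} \geq 1$ by hypothesis. For $j = s + 1, \ldots, N$, the vertex $v_j$ is not a source and admits some incoming edge $v_i \to v_j$; the topological ordering gives $i < j$, and by the induction hypothesis the current value at $v_i$ is at least $1$. I apply $\alpha_{i,j,k}$ with $k$ chosen so that the new $v_j$-coordinate $n_{v_j} + k n_{v_i}$ is $\geq 1$; this shear affects only the $v_j$-coordinate, so the positivity established at earlier vertices is preserved. After completing all steps, the resulting tuple $(m_v)$ satisfies $m_v \geq 1$ for every $v \in \overline{G}^0$, lies in $H$, and is the image of $(n_v)$ under the composite $H$-preserving automorphism $\alpha$. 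I expect the main obstacle to be the verification of the auxiliary claim, which genuinely uses both the acyclicity (so that the telescoping path does not wrap around) and the amplification hypothesis (so that extra parallel edges are available to absorb the correction terms into existing generators).
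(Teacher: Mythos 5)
Your proof is correct, but it is organized rather differently from the paper's. The paper writes one automorphism down in closed form: for each vertex $v$ it sets $T_v$ equal to the set of sources $w$ with $w \geq v$, chooses $k_{v,w}$ minimal with $n_w k_{v,w} + n_v \geq 1$, puts $m_v = n_v + \sum_{w \in T_v} n_w k_{v,w}$, and defines $\alpha$ by shearing each source basis vector $\delta_w$ by downstream terms supported on the vertices $w$ reaches, with the inverse $\beta$ given explicitly; positivity at every vertex then follows at once from $n_w \geq 1$ at sources, with no induction. You instead factor $\alpha$ as a product of elementary shears along edges, processed in a topological order, and propagate positivity inductively from the sources; the price is your auxiliary claim that each shear $\alpha_{a,b,k}$ with $v_a \geq v_b$ satisfies $\alpha_{a,b,k}(H) = H$, which you verify by the amplification-plus-telescoping argument. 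That verification is essentially the computation the paper needs, but only sketches as a ``nearly identical argument,'' to see that its inverse $\beta$ maps $H$ into $H$, so your route makes explicit a point the paper glosses over. One small imprecision: positive multiples of your telescoping identity give $K\delta_{v_a} - K\delta_{v_b}$, not $\delta_{v_a} - K\delta_{v_b}$; to get the latter you should place the multiplicity $K$ on the first edge of the chosen path via amplification (i.e., use the generator $\delta_{v_a} - K\delta_{u_1}$) and then add $K$ copies of each subsequent one-edge generator --- both ingredients are already stated in your sketch, so this is a one-line fix. In exchange, the paper's method yields explicit formulas for $\alpha$, $\beta$, and $(m_v)$ in a single step, while yours localizes every verification to a single elementary shear and replaces the global bookkeeping over the sets $T_v$ by an induction along the topological order.
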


\begin{proof}
For each $v \in \overline{G}^{0}$, set $T_{v} := \setof{ w \in S }{ w \geq v}$.  Note that $T_v$ is a finite set because $\overline{G}$ has a finite number of vertices, and $T_v$ is nonempty since $v \in T_v$.  For each $v \in \overline{G}^{0}$ and $w \in T_v$, let $k_{v,w}$ denote the smallest element of $\N \cup \{ 0 \}$ such that $n_{w} k_{v,w}  + n_{v} \geq 1$.  (Note that if $v \in S$, then $T_v = \{ v \}$ and $k_{v,v} = 0$ since $n_v \geq 1$ by hypothesis.)

Define $( m_{v} )_{ v \in \overline{G}^{0} } \in H$ by $m_v := n_v + \sum_{w \in T_v}  n_w k_{v,w}$ for $v \in \overline{G}^{0}$.  Observe that $m_v \geq 1$ for all $v \in \overline{G}^{0}$ and $m_v = n_v$ for all $v \in S$.  Also define a homomorphism  $\ftn{ \alpha }{ \bigoplus_{ v \in \overline{G}^{0} } \Z }{ \bigoplus_{ v \in \overline{G}^{0} } \Z }$ by

$$\alpha ( \delta_{v} ) = \begin{cases}  \delta_{v} + \displaystyle \sum_{ \begin{subarray}{c}w \in \overline{G}^{0}\  \text{with} \ \\ v \in T_{w} \end{subarray} } k_{w, v } \delta_{w} & \text{ if $v \in S$} \\ 
\delta_v & \text{ if $v \in  \overline{G}^{0} \setminus S$}
\end{cases}$$
One can verify that $\alpha$ is an isomorphism with inverse given by the homomorphism $\ftn{ \beta }{ \bigoplus_{ v \in \overline{G}^{0} } \Z }{ \bigoplus_{ v \in \overline{G}^{0} } \Z }$ with
\begin{align*}
\beta ( \delta_{v} ) = \begin{cases} \delta_{v} - \displaystyle \sum_{ \begin{subarray}{c}w \in \overline{G}^{0}\  \text{with} \ \\ v \in T_{w} \end{subarray} } k_{v, w } \delta_{w} & \text{ if $v \in S$} \\
\delta_v & \text{ if $v \in \overline{G}^{0} \setminus S$.}
\end{cases}
\end{align*}
In addition, 
\begin{align*}
\alpha &\left( ( n_{v} )_{v \in \overline{G}^{0} }  \right) = \alpha \left( \sum_{ v \in \overline{G}^{0} } n_{v} \delta_{v} \right) = \alpha \left( \sum_{ v \in S } n_{v} \delta_{v} \right) + \alpha \left( \sum_{ w \in \overline{G}^{0} \setminus S } n_{w} \delta_{w} \right) \\
&= \sum_{ v \in S } n_v \left( \delta_{v} +   \sum_{ \begin{subarray}{c}w \in \overline{G}^{0}\  \text{with} \ \\ v \in T_{w} \end{subarray} } k_{w, v} \delta_{w} \right) + \sum_{ w \in \overline{G}^{0} \setminus S } n_{w} \delta_{w} \\
				&=  \sum_{ v \in S } n_{v} \delta_{v}  + \sum_{ w \in \overline{G}^{0} \setminus S } \left( n_{w} + \sum_{v \in T_{w} }  n_{v} k_{w,v}  \right) \delta_{w} = \sum_{ v \in S } m_{v} \delta_{v} + \sum_{ w \in \overline{G}^{0} \setminus S } m_{w} \delta_{w}= ( m_{v} )_{ v \in \overline{G}^{0} }. 
\end{align*}

It remains to verify that $\alpha ( H ) = H$.  To do this, it suffices to show that $\alpha (H) \subseteq H$ and $\beta (H) \subseteq H$.  To establish $\alpha (H) \subseteq H$, we begin with $v \in  \overline{G}^{0}$.  By the definition of $\alpha$ and the fact that $T_w$ is finite for all $w \in \overline{G}^0$, we see that $\alpha(\delta_v) \in H$.  Next, let $v \in \overline{G}^{0}$ and let $T$ be a finite subset of $s^{-1} ( v )$.  

If $v \notin S$, then $r(e) \notin S$ for all $e \in T$, and $\alpha \left( \delta_{v} - \sum_{ e \in T } \delta_{r(e) } \right) = \delta_{v} - \sum_{ e \in T } \delta_{r(e)} \in H$.  If $v \in S$, then $r(e) \notin S$ for all $e \in T$, and 
\begin{align*}
\alpha \left( \delta_{v} - \sum_{ e \in T } \delta_{ r(e) } \right) &= \left( \delta_{v} - \sum_{ e \in T } \delta_{r(e) } \right) + \sum_{ \begin{subarray}{c}w \in \overline{G}^{0}\  \text{with} \ \\ v \in T_{w} \end{subarray} } k_{v, w } \delta_{w}\in H. 
\end{align*}
Since these elements generate $H$ and $\alpha$ is a homomorphism, we may conclude that $\alpha (H) \subseteq H$.  A nearly identical argument shows that $\beta (H) \subseteq H$.  Hence $\alpha (H) = H$.
\end{proof}

\begin{defin}
An element $a$ in a $C^{*}$-algebra $\mathfrak{A}$ is said to be \emph{full} if $a$ is not contained in a proper ideal of $\mathfrak{A}$.
\end{defin}

\begin{propo}\label{p:corners}
Let $\overline{G}$ be an acyclic amplified graph with a finite number of vertices, and let $p$ be a  full projection in $C^{*} ( \overline{G} ) \otimes \K$.  Then there exists a graph $E$ with finitely many vertices such that $p ( C^{*} ( \overline{G} ) \otimes \K ) p \cong C^{*} (E)$.
\end{propo}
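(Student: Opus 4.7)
\emph{Plan.} Since $\overline{G}$ is a finite acyclic amplified graph, $C^*(\overline{G})$ is a unital AF algebra and $K_0(C^*(\overline{G}) \otimes \K) = K_0(C^*(\overline{G})) \cong \bigoplus_{v \in \overline{G}^0} \Z$, whose positive cone is the monoid $H$ from Lemma~\ref{l:isounit}; no Cuntz--Krieger relation arises between the generators $\delta_v = [p_v]$ because every non-sink of $\overline{G}$ is an infinite emitter. Write $[p] = (n_v)_v$. Saturation is vacuous in $\overline{G}$, so ideals of $C^*(\overline{G}) \otimes \K$ correspond to hereditary subsets of $\overline{G}^0$, and $p$ lies in the ideal indexed by a hereditary set $J$ precisely when $n_v = 0$ for all $v \notin J$. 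Since every vertex of the finite acyclic graph $\overline{G}$ is reachable from some source, I would argue that $p$ is full if and only if $n_v \geq 1$ for every source $v \in S$, which is exactly the hypothesis of Lemma~\ref{l:isounit}.

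The second step is to transport $[p]$ to a position with all coordinates at least $1$. Lemma~\ref{l:isounit} supplies $(m_v) \in H$ with $m_v \geq 1$ for every $v$ and an ordered-group automorphism $\alpha \colon \bigoplus \Z \to \bigoplus \Z$ with $\alpha(H) = H$ and $\alpha((n_v)) = (m_v)$. Because $C^*(\overline{G}) \otimes \K$ is a separable stable AF algebra, Elliott's classification lifts $\alpha$ to a $*$-automorphism $\Phi$ of $C^*(\overline{G}) \otimes \K$; setting $q := \Phi(p)$ gives a projection with $[q] = (m_v)$ and a $*$-isomorphism $p(C^*(\overline{G}) \otimes \K) p \cong q(C^*(\overline{G}) \otimes \K) q$. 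It therefore suffices to realize the latter corner as a graph $C^*$-algebra.

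For the third step I construct $E$ by hand: take $\sum_v m_v$ vertices $\{v^{(i)} : v \in \overline{G}^0,\ 1 \leq i \leq m_v\}$, a single edge $v^{(i)} \to v^{(i+1)}$ for $1 \leq i < m_v$, and from $v^{(m_v)}$ infinitely many edges to $w^{(1)}$ for every pair $(v,w)$ with $v\overline{G}^1 w \neq \emptyset$. Then $E$ is acyclic with finitely many vertices, so $C^*(E)$ is a unital AF algebra. The Cuntz--Krieger relations at the regular vertices $v^{(i)}$ with $i < m_v$ force $[p_{v^{(i)}}] = [p_{v^{(i+1)}}]$, so writing $\xi_v := [p_{v^{(m_v)}}]$ one has $K_0(C^*(E)) = \bigoplus_v \Z \xi_v$ with order unit $\sum_v m_v \xi_v$. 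The positive cone is generated by the $\xi_v$ together with $\xi_v - \sum_w k_w \xi_w$ (for $v$ an infinite emitter of $\overline{G}$ and $k_w$ of finite support among $w$ with $v\overline{G}^1 w \neq \emptyset$); under $\xi_v \leftrightarrow \delta_v$ this is exactly $(H, (m_v))$. Elliott's classification of unital AF algebras by scaled ordered $K_0$ then yields $C^*(E) \cong q(C^*(\overline{G}) \otimes \K) q \cong p(C^*(\overline{G}) \otimes \K) p$.

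The main obstacle is the explicit construction of $E$: Lemma~\ref{l:isounit} only repositions $[p]$ so that all coordinates are at least $1$, and one still has to exhibit a graph whose scaled ordered $K_0$ matches the resulting invariant. Subdividing each vertex of $\overline{G}$ into a path of length $m_v$ absorbs the multiplicity into the order unit while preserving the amplified-emitter structure that governs the positive cone, but one must verify carefully that every element of $H$ really is realized and that no spurious relations are introduced at the linking vertices $v^{(m_v)}$.
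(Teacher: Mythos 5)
Your proposal is correct and follows essentially the same route as the paper: identify $(K_0,K_0^+)$ of $C^{*}(\overline{G})\otimes\K$ with $\bigl(\bigoplus_{v}\Z,\,H\bigr)$, use Lemma~\ref{l:isounit} to move $[p]_0$ to a vector $(m_v)$ with all entries at least $1$, construct a finite acyclic graph whose scaled ordered $K_0$ is $\bigl(\bigoplus_v\Z,\,H,\,(m_v)\bigr)$, and conclude with Elliott's classification of unital AF-algebras. The only cosmetic differences are that you lift $\alpha$ to a $*$-automorphism of $C^{*}(\overline{G})\otimes\K$ rather than composing order isomorphisms on $K_0$ (including $K_0(\iota)$ for the full-corner inclusion, as the paper does), and your graph subdivides each vertex $v$ into a chain of $m_v$ vertices with the infinite edge bundles emitted from the chain's end, whereas the paper attaches a head of $m_v-1$ new vertices feeding into the original vertex $v$; both graphs carry the same invariant.
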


\begin{proof}
It follows from \cite[Theorem~2.2]{mt:orderkthy} that $K_{0} ( C^{*} ( \overline{G} ) ) \cong \bigoplus_{ v \in \overline{G}^{0} } \Z$ via an isomorphism taking $K_{0} ( C^{*} ( \overline{G} ) )_{+}$ onto the monoid
$H \subseteq \bigoplus_{ v \in \overline{G}^{0} } \Z$ generated by 
\begin{align*}
\setof{ \delta_{v} }{ v \in \overline{G}^{0} } \cup \setof{ \delta_{v} - \sum_{ e \in T } \delta_{ r_{\overline{G} } (e) } }{\text{$v \in \overline{G}^0_\textnormal{inf}$ and $T$ is a finite subset of $s^{-1}_{\overline{G} }(v)$}}.
\end{align*}
Denote this isomorphism from $K_{0} ( C^{*} ( \overline{G} ) )$ to $\bigoplus_{ v \in \overline{G}^{0} } \Z$ by $\phi$.

Let $p$ be a  full projection in $C^{*} ( \overline{G} ) \otimes \K$.  Let $S = \setof{ v \in \overline{G}^{0} }{ \text{$v$ is a source in $\overline{G}$} }$.  Then $\phi ( [p]_0 ) = ( n_{v} )_{ v \in \overline{G} }$ such that $n_{v} \geq 0$ for all $v \in \overline{G}^{0}$.  Since $p$ is a  full projection, we must have that $n_{v} \geq 1$ for all $v \in S$.  By Lemma \ref{l:isounit}, there exists $( m_{v} )_{ v \in \overline{G}^{0} } \in H$ with $m_{v} \geq 1$ for all $v \in \overline{G}^{0}$ and there exists an isomorphism $\ftn{ \alpha }{ \bigoplus_{ v \in \overline{G}^{0} } \Z }{ \bigoplus_{ v \in \overline{G}^{0} } \Z }$ such that $\alpha ( H ) = H$ and $\alpha \left( ( n_{v} )_{v \in \overline{G}^{0} } \right) = ( m_{v} )_{v \in \overline{G}^{0} }$.

Define a directed graph as follows:  Set $$E^{0} := \overline{G}^{0} \cup \setof{ w_{k,v} }{ \text{$v \in \overline{G}^0$ and $1 \leq k \leq m_{v} - 1$} }
$$ and $$
E^{1} := \overline{G}^{1} \cup \setof{ e_{k, v } }{ \text{$v \in \overline{G}^0$ and $1 \leq k \leq m_{v} -1$} }.
$$
Define $s_{E} \vert_{\overline{G}^{1} }= s_{ \overline{G} }$ and $r_{E} \vert_{ \overline{G}^{1} } = r_{ \overline{G} }$.  Also define $s_{E} ( e_{k,v} ) = w_{k,v}$ for all $k$ and $v$, and define $r_{E} ( e_{k,v} ) = w_{k+1,v}$ when $1 \leq k \leq m_{v} - 2$ and $r_{E} ( e_{ m_{v}-1, v } ) = v$.  Using \cite[Theorem~2.2]{mt:orderkthy} and the fact that $E$ is obtained by adding finite heads at the vertices of $\overline{G}^0$, we have that $K_{0} ( C^{*} (E) )$ is isomorphic to $\bigoplus_{ v \in \overline{G}^{0} } \Z$ via an isomorphism taking $K_{0} ( C^{*} (E) )_{+}$ onto the monoid
$H \subseteq \bigoplus_{ v \in \overline{G}^{0} } \Z$ generated by $$\setof{ \delta_{v} }{ v \in \overline{G}^{0} }\cup
\setof{ \delta_{v} - \sum_{ e \in T } \delta_{ r_{\overline{G} }(e) } }{\text{$v \in \overline{G}^0_\textnormal{inf}$ and $T$ is a finite subset of $s^{-1}_{ \overline{G} } (v)$}},
$$
and furthermore, this isomorphism takes $[ 1_{ C^{*} (E) } ]_0$ to $ ( m_{v} )_{ v \in \overline{G}^{0} }$.  Denote this isomorphism from $K_{0} ( C^{*} (E) )$ to $\bigoplus_{ v \in \overline{G}^{0} } \Z$ by $\gamma$.

Let $\iota : p ( C^{*} ( \overline{G}  ) \otimes \K ) p \hookrightarrow C^{*} ( \overline{G} ) \otimes \K $ be the inclusion map.  Then 
\begin{align*}
\ftn{ K_{0} ( \iota ) }{ K_{0} \left( p ( C^{*} ( \overline{G}  ) \otimes \K ) p  \right) }{ K_{0} ( C^{*} ( \overline{G} ) ) }
\end{align*}
is an order isomorphism with $K_{0} (\iota) \left( \left[  1_{ p ( C^{*} ( \overline{G}  ) \otimes \K ) p } \right]_0 \right) = [ p ]_0$.  Hence $\gamma^{-1} \circ \alpha \circ \phi \circ K_{0} ( \iota )$ is an order isomorphism from $K_{0} ( p ( C^{*} ( \overline{G} ) \otimes \K ) p )$ to $K_{0} ( C^{*} (E) )$ with
\begin{align*}
( \gamma \circ \alpha \circ \phi \circ K_{0} ( \iota ) ) \left( \left[ 1_{ p ( C^{*} ( \overline{G} ) \otimes \K ) p } \right]_0 \right) &= ( \gamma \circ \alpha \circ \phi )( [p]_0 ) = (\gamma \circ \alpha )\left( ( n_{v} )_{v \in \overline{G}^{0} } \right) \\
&= \gamma \left( (m_{v} )_{ v \in \overline{G}^{0} } \right) = [ 1_{ C^{*} (E) } ]_0.
\end{align*}
Since $\overline{G}$ and $E$ are acyclic graphs with finitely many vertices, $p ( C^{*} (\overline{G} ) \otimes \K ) p$ and $C^{*} (E)$ are unital AF-algebras.  Hence, by Elliott's classification theorem for AF-algebras \cite{af}, we have $p ( C^{*} (\overline{G} ) \otimes \K ) p \cong C^{*} (E)$.
\end{proof}

\begin{examp}
The proof of Proposition~\ref{p:corners} actually shows how to construct the graph $E$ from the graph $G$ in the proposition's statement, provided we know the class of $[ p ]_0$ in $K_{0} ( C^{*}( \overline{G} ) )$.  For example, suppose $G$ is the graph
\begin{equation*}
\xymatrix{
\bullet \ar@{=>}[r]^\infty & \bullet.
}
\end{equation*}
and suppose $p$ is  full projection in $C^{*} ( \overline{G} ) \otimes \K$ such that $[ p ]_0$ is identified with $( 3, 2 )$ in $K_{0} ( C^{*}( \overline{G} ) ) \cong \Z \oplus \Z$.  If we define $E$ to be the graph 
\begin{align*}
\xymatrix{
\bullet \ar[r] & \bullet \ar[r] & \bullet \ar@{=>}[d]^\infty \\
		& \bullet \ar[r] & \bullet.
}
\end{align*} 
then the proof of Proposition~\ref{p:corners} shows that $p ( C^{*} ( \overline{G} ) \otimes \K ) p \cong C^{*} ( E )$.
\end{examp}

\begin{theor}\label{t:typeIgraph}
If $\mathfrak{A}$ is a separable, unital, Type~I $C^{*}$-algebra with finitely many ideals, then $\mathfrak{A}$ is isomorphic to a graph $C^{*}$-algebra.
\end{theor}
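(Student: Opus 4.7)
\medskip\noindent\textbf{Proof plan.} The plan is to combine Proposition~\ref{p:stableiso} with Proposition~\ref{p:corners}, first realizing $\mathfrak{A}$ as a full unital corner of a stabilized amplified graph $C^{*}$-algebra, and then peeling off the stabilization. By Proposition~\ref{p:stableiso} there exist a finite graph $G$ and a $*$-isomorphism $\Phi \colon \mathfrak{A} \otimes \K \to C^{*}(\overline{G}) \otimes \K$. Fix a rank-one projection $e \in \K$ and set $p := \Phi(1_{\mathfrak{A}} \otimes e)$. Since $1_{\mathfrak{A}} \otimes e$ is a full projection in $\mathfrak{A} \otimes \K$ (the two-sided ideal it generates contains every elementary tensor $a\otimes e_{ij}$) whose corner is canonically isomorphic to $\mathfrak{A}$, the image $p$ is a full projection in $C^{*}(\overline{G}) \otimes \K$ and
\[
\mathfrak{A} \;\cong\; p\bigl(C^{*}(\overline{G}) \otimes \K\bigr)p.
\]

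Next I would verify that $\overline{G}$ is acyclic, which is the remaining hypothesis needed to invoke Proposition~\ref{p:corners}. Lemma~\ref{l:type1subq} gives that every simple subquotient of $\mathfrak{A} \otimes \K$ is isomorphic to $\K$, and this property passes across $\Phi$ to $C^{*}(\overline{G}) \otimes \K$. If $\overline{G}$ contained a cycle, then because $\overline{G}$ is amplified, every vertex on that cycle would be an infinite emitter sitting inside a strongly connected subgraph, forcing $C^{*}(\overline{G})$ to possess a simple \emph{purely infinite} subquotient and contradicting the preceding sentence. Hence $\overline{G}$ is an acyclic amplified graph with finitely many vertices, and Proposition~\ref{p:corners} delivers a graph $E$ with finitely many vertices such that $p(C^{*}(\overline{G}) \otimes \K)p \cong C^{*}(E)$. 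Composing the two isomorphisms gives $\mathfrak{A} \cong C^{*}(E)$, as required.

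Essentially all of the substantive work has already been done: Proposition~\ref{p:stableiso} is extracted from the classification Theorem~\ref{t:amplifiedfree} and the range result \cite[Theorem~7.3]{seaser:amplified}, while Proposition~\ref{p:corners} relies on the $K_{0}$-surgery of Lemma~\ref{l:isounit} together with Elliott's classification theorem for AF-algebras. Given those two propositions, the theorem is essentially an assembly step, and the only point requiring any thought is the acyclicity check for $\overline{G}$; without that check Proposition~\ref{p:corners} does not apply. No Bratteli-diagram manipulations are needed here, as those are deferred to the nonunital case in Section~\ref{nonunital-sec}.
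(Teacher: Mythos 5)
Your proposal is correct and follows essentially the same route as the paper: apply Proposition~\ref{p:stableiso} to get $\mathfrak{A}\otimes\K \cong C^{*}(\overline{G})\otimes\K$, realize $\mathfrak{A}$ as the corner by the full projection corresponding to $1_{\mathfrak{A}}\otimes e_{11}$, and conclude with Proposition~\ref{p:corners}. Your explicit check that $\overline{G}$ is acyclic (via the absence of purely infinite simple subquotients) is a point the paper leaves implicit, and it is a correct and welcome addition rather than a deviation.
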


\begin{proof}
By Proposition~\ref{p:stableiso} there exists a finite graph $G$ such that $\mathfrak{A} \otimes \K \cong C^{*} ( \overline{G} ) \otimes \K$.  Let $\{ e_{ij} \}$ be a system of matrix units for $\K$.  Since $\mathfrak{A} \cong ( 1_{ \mathfrak{A} } \otimes e_{11} ) ( \mathfrak{A} \otimes \K ) ( 1_{ \mathfrak{A} } \otimes e_{11} )$ and $1_{ \mathfrak{A} } \otimes e_{11}$ is a  full projection in $\mathfrak{A} \otimes \K$, we have that $\mathfrak{A} \cong p ( C^{*} ( \overline{G} ) \otimes \K ) p$ for some full projection $p$ in $C^{*} ( \overline{G} ) \otimes \K$.  By Proposition~\ref{p:corners} $p ( C^{*} ( \overline{G} ) \otimes \K ) p \cong C^{*} (E)$ for some graph $E$.  Hence $\mathfrak{A} \cong C^{*} (E)$.
\end{proof}

We conclude this section by deducing necessary and sufficient conditions for a unital AF-algebra to be a graph $C^*$-algebra, and observe that this result provides further support for the conjecture from the introduction.

\begin{corol} \label{AF-conj-for-unital-cor}
A unital AF-algebra is isomorphic to a graph $C^{*}$-algebra if and only if it is a Type~I $C^{*}$-algebra with finitely many ideals.
\end{corol}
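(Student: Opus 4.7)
The plan is to prove the corollary as a direct combination of two results: Theorem~\ref{t:typeIgraph}, just established, and \cite[Proposition~4.21]{kst:realization}, recalled in the introduction as the source of the ``only if'' direction of the overarching Conjecture. No further machinery is needed; both implications are one-line reductions once the right statements are in hand.

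For the ``if'' direction, suppose $\mathfrak{A}$ is a unital AF-algebra that is Type~I with finitely many ideals. By definition AF-algebras are separable, so $\mathfrak{A}$ is a separable, unital, Type~I $C^{*}$-algebra with finitely many ideals, and Theorem~\ref{t:typeIgraph} then produces a graph $E$ with $\mathfrak{A}\cong C^{*}(E)$. For the ``only if'' direction, suppose $\mathfrak{A}$ is a unital AF-algebra isomorphic to a graph $C^{*}$-algebra. Since $\mathfrak{A}$ is a unital quotient of itself (the quotient by the zero ideal), \cite[Proposition~4.21]{kst:realization} applied to the AF graph $C^{*}$-algebra $\mathfrak{A}$ forces this unital quotient, namely $\mathfrak{A}$ itself, to be a Type~I $C^{*}$-algebra with finitely many ideals.

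There is essentially no obstacle: the substantive content has been absorbed into Theorem~\ref{t:typeIgraph} (whose proof rests on Proposition~\ref{p:stableiso} and Proposition~\ref{p:corners}) and the cited \cite[Proposition~4.21]{kst:realization}. The only minor point to mention is that separability of an AF-algebra is built into the definition, so the hypothesis of Theorem~\ref{t:typeIgraph} is indeed satisfied. If desired, one can also remark that this corollary is consistent with, and is in fact a special case of, the Conjecture stated in the introduction, since every quotient of a unital Type~I $C^{*}$-algebra with finitely many ideals is again of this form.
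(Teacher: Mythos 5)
Your proof is correct and follows exactly the paper's argument: the ``if'' direction is Theorem~\ref{t:typeIgraph} (noting AF-algebras are separable), and the ``only if'' direction is \cite[Proposition~4.21]{kst:realization} applied to $\mathfrak{A}$ viewed as its own unital quotient.
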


\begin{proof}
Suppose $\mathfrak{A}$ is a unital AF-algebra.   If $\mathfrak{A}$ is a Type~I $C^{*}$-algebra with finitely many ideals, then $\mathfrak{A}$ is isomorphic to a graph $C^*$-algebra by Theorem~\ref{t:typeIgraph}.  Conversely, if $\mathfrak{A}$ is isomorphic to a graph $C^*$-algebra, then \cite[Proposition~4.21]{kst:realization} shows that $\mathfrak{A}$ is a Type~I $C^*$-algebra with a finite number of ideals.
\end{proof}

\section{Technical lemmas for Bratteli diagrams} \label{Brat-sec}

In Section~\ref{nonunital-sec} we shall need a number of technical results about Bratteli diagrams for certain AF-algebras.  This section is devoted to proving these lemmas, with our primary goal being the proof of Lemma~\ref{l:bdfinal} at the end of the section.

\begin{defin}
A Bratteli diagram $(E, d_{E} )$ consists of a graph $E = ( E^{0} , E^{1}, r_{E}, s_{E} )$ and a \emph{degree function} $\ftn{ d_{E} }{ E^{0} }{ \N }$ such that 
\begin{itemize}
\item[(1)] $E$ has no sinks;

\item[(2)] $E^{0}$ is partitioned as disjoint sets $E^{0} = \bigsqcup_{ n = 1}^{ \infty } W_{n}$ with each $W_{n}$ a finite set;

\item[(3)] for each $e \in E^{1}$, there exists $n \in \N$ such that $s_{E} ( e ) \in W_{n}$ and $r_{E} ( e ) \in W_{n+1}$;

\item[(4)] for each $v \in E^{0}$, 
\begin{align*}
d_{E} (v) \geq \sum_{ e \in E^{1}v} d_{E}( s_{E} ( e ) ).
\end{align*}
\end{itemize}
We call $W_n$ the {$n$\textsuperscript{th} level} of the Bratteli diagram, and when we write $E^0 = \bigsqcup_{n=1}^\infty W_n$, we say $E^0$ is \emph{partitioned into levels} by the $W_n$.
\end{defin}

\begin{defin}
Let $(E, d_{E})$ with $E^0$ be a Bratteli diagram partitioned into levels as $E^0= \bigsqcup_{n=1}^\infty W_n$.  For any increasing subsequence $\{ n_{m} \}_{ m = 1}^{ \infty}$ of $\N$, we define a new Bratteli diagram $(F, d_{F} )$ as follows:
\begin{itemize}
\item[(1)] The set of vertices is partitioned into levels as $F^0 := \bigsqcup_{ m = 1}^{ \infty } W_{n_{m}}$;

\item[(2)] The set of edges is $F^1 := \bigcup_{m = 1}^{ \infty } W_{n_{m}} E^{*} W_{n_{m+1}}$ with the range and source map as defined on the paths of $E$; and 

\item[(3)] $d_{F} = d_{E} \vert_{ F^0}$.
\end{itemize}
We call $(F, d_{F} )$ a \emph{telescope} of $(E, d_{E})$.  

We say that two Bratteli diagrams $(E, d_{E} )$ and $( F , d_{F} )$ are \emph{equivalent} (sometimes also called \emph{telescope equivalent}) if there is a finite sequence of Bratteli diagrams  $( E_{1}, d_{E_{1} })$, \dots, $( E_{n} , d_{ E_{n} } )$ such that $( E_{1}, d_{E_{1}} ) = ( E, d_{E} )$, $( E_{n} , d_{E_{n}} ) = ( F, d_{F} )$, and for each $1 \leq i \leq n-1$, one of $( E_{i}, d_{ E_{i} } )$ and $( E_{i+1}, d_{ E_{i+1} } )$ is a telescope of the other.  Bratteli proved in \cite{ob:af} that two Bratteli diagrams give rise to isomorphic AF-algebras if and only if the diagrams are equivalent (see \cite[\S1.8 and Theorem 2.7]{ob:af}).  
\end{defin}

The following lemma is contained implicitly in the proof of \cite[Lemma~3.2]{kst:realization}.  For the convenience of the reader, we provide an explicit proof here.

\begin{lemma}\label{l:sumproj}
Let $(E, d_{E} )$ be a Bratteli diagram for the $C^*$-algebra $\mathfrak{A}$ with $E^{0}$ partitioned into levels as $E^{0} = \bigsqcup_{ n = 1}^{ \infty } W_{n}$.  Suppose $v \in W_n$ and $k \leq n$.  If $w \in E^0$ with $v \geq w$ and 
\begin{align*}
d_{E}(w)= \sum_{ \alpha \in W_{k} E^{*} w } d_{E} ( s_{E}(\alpha) ),
\end{align*}  
then 
\begin{align*}
d_{E}(v) = \sum_{ \alpha \in W_{k} E^{*} v } d_{ E } ( s_{E}(\alpha) ).
\end{align*}
In addition, if for every  $v \in E^0$ there exists $w \in E^0$ with $v \geq w$ and
\begin{align*}
d_{E}(w) = \sum_{ \alpha \in W_{1} E^{*} w } d_{ E } ( s_{E}(\alpha) ),
\end{align*}
then $\mathfrak{A}$ is a unital AF-algebra.
\end{lemma}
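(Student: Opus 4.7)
The plan is to exploit the fact that in a Bratteli diagram every path from level $k$ to a later vertex factors uniquely through each intermediate level, combined with an iterated form of the Bratteli inequality. Together these will allow the hypothesis to be pinched into a chain of equalities that forces the desired identity at $v$.

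For Part 1, write $w \in W_m$; then $m \ge n$, and the case $m = n$ forces $w = v$ and is trivial. I would first establish by induction on $m - n$ the telescoped inequality
\[
d_E(u) \;\ge\; \sum_{v' \in W_n} |v' E^{m-n} u|\, d_E(v') \qquad \text{for all } u \in W_m,
\]
the inductive step combining the Bratteli inequality $d_E(u) \ge \sum_{e \in E^1 u} d_E(s_E(e))$ with the path-count identity $|v' E^{m+1-n} u| = \sum_{v'' \in W_m} |v' E^{m-n} v''| \cdot |v'' E^1 u|$. Decomposing each path from $W_k$ to $w$ at its unique vertex in $W_n$ gives
\[
\sum_{\alpha \in W_k E^* w} d_E(s_E(\alpha)) \;=\; \sum_{v' \in W_n} |v' E^* w| \sum_{\beta \in W_k E^* v'} d_E(s_E(\beta)).
\]
Applying the telescoped inequality from $W_n$ up to $w$, and from $W_k$ up to each $v' \in W_n$, then sandwiches
\[
d_E(w) \;\ge\; \sum_{v' \in W_n} |v' E^* w|\, d_E(v') \;\ge\; \sum_{v' \in W_n} |v' E^* w| \sum_{\beta \in W_k E^* v'} d_E(s_E(\beta)) \;=\; \sum_{\alpha \in W_k E^* w} d_E(s_E(\alpha)).
\]
The hypothesis equates the two ends, so every inequality is an equality. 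Since $v \ge w$ with $v \in W_n$ gives $|v E^* w| \ge 1$, equality in the middle step forces the summand at $v' = v$ to match, yielding $d_E(v) = \sum_{\beta \in W_k E^* v} d_E(s_E(\beta))$.

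For Part 2, I would apply Part 1 with $k = 1$: the hypothesis furnishes, for each $v \in E^0$, a descendant $w$ satisfying the equation, and Part 1 propagates it back to $v$. Writing $\mathfrak{A} = \varinjlim_m A_m$ with $A_m := \bigoplus_{v \in W_m} \mathsf{M}_{d_E(v)}$, the identity $d_E(w) = \sum_{\alpha \in W_1 E^* w} d_E(s_E(\alpha))$ for every $w \in W_m$ says exactly that the image of $1_{A_1}$ in the $w$-block of $A_m$ is a projection of rank $d_E(w)$, hence the full identity of that block. Summing over blocks, $1_{A_1}$ maps to $1_{A_m}$ for every $m$; since $W_1$ is finite, $A_1$ is a unital finite-dimensional algebra, and its unit therefore descends to a unit of $\mathfrak{A}$. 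The only real difficulty I anticipate is keeping the path-counting bookkeeping straight in the induction; no deeper obstacle is expected.
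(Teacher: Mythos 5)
Your proposal is correct and is essentially the paper's argument: Part 1 is the same pinching of the iterated Bratteli inequality at the intermediate level $W_n$, just written with explicit path counts $|v'E^*w|$ instead of the paper's decomposition over paths $\beta \in W_nE^*w$. The only minor divergence is in Part 2, where the paper inducts on levels to show each one-step connecting map satisfies $d_E(v)=\sum_{e\in E^1v}d_E(s_E(e))$ (i.e.\ is unital), whereas you apply Part 1 with $k=1$ to every vertex and conclude directly that each composite embedding $A_1\to A_m$ is unital; both versions then rest on the same standard fact that a common unit for a dense increasing union of subalgebras is a unit for the limit.
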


\begin{proof}
Note that 
\begin{align*}
d_{E}(w)  &= \sum_{ \alpha \in W_{k} E^{*} w } d_{ E } ( s_{E}(\alpha) ) = \sum_{ \beta \in W_{n } E^{*} w } \left( \sum_{ \gamma \in W_{k} E^{*} s_{E}(\beta) } d_{ E } ( s_{E}(\gamma) ) \right) \\
		&\leq \sum_{ \beta \in W_{n} E^{*} w } d_{E}(  s_{E}(\beta) ) \leq d_{E}(w)
\end{align*}
and hence we have equality throughout.  In particular, we deduce
\begin{align*}
d_{  E } ( s_{E}(\beta) ) =  \sum_{ \gamma \in W_{k} E^{*} s_{E}(\beta) } d_{ E } ( s_{E}(\gamma) ).
\end{align*}
for each $\beta \in W_{n} E^{*} w$.  Since $v \geq w$, there exists $\beta \in W_{n} E^{*} w$ such that $s_{E}( \beta ) = v$.  Thus 
\begin{align*}
d_{ E } (v) =  \sum_{ \gamma \in W_{k} E^{*} v} d_{ E } ( s_{E}(\gamma) ).
\end{align*}

For the second part of the lemma, suppose that for every  $v \in F^0$ there exists $w \in F^0$ with $v \geq w$ and
\begin{align*}
d_{E}(w) = \sum_{ \alpha \in W_{1} E^{*} w } d_{ E } ( s_{E}(\alpha) ),
\end{align*}
To show that $\mathfrak{A}$ is unital, it is enough to show that for each $v \in \bigsqcup_{n=2}^\infty W_n$ we have
\begin{align} \label{AF-unital-eq}
d_{E}(v) = \sum_{ e \in E^{1} v  } d_{ E } ( s_{E}(e) ).
\end{align} 
We shall obtain this fact by induction on $n$.  For the base case of $n=2$, we suppose $v \in W_2$.  By hypothesis there exists $w \in F^0$ such that 
$v \geq w$ and
\begin{align*}
d_{E}(w) = \sum_{ \alpha \in W_{1} E^{*} w } d_{ E } ( s_{E}(\alpha) ).
\end{align*}
It follows from the first part of this lemma, and the fact that $v \in W_2$, that
\begin{align*}
d_{E}(v) = \sum_{ \alpha \in W_{1} E^{*} v } d_{ E } ( s_{E}(\alpha) ) = \sum_{ e \in E^{1} v  } d_{ E } ( s_{E}(e) ).
\end{align*}
For the inductive step suppose $n \geq 2$ and \eqref{AF-unital-eq} holds for all vertices in $W_n$.  Let $v \in W_{n+1}$.  By hypothesis, there exists $w \in F^0$ with $v \geq w$ and 
\begin{align*}
d_{E } ( w )= \sum_{ \alpha \in W_{1} E^{*} w } d_{ E } ( s_{E}(\alpha) ).
\end{align*}
Thus
\begin{align*}
d_{E} ( v ) &= \sum_{ \beta \in W_{n} E^{*} v } \left( \sum_{ \gamma \in W_{1} E^{*} s_{E}( \beta) } d_{ E } ( s_{E}(\gamma) ) \right) \leq \sum_{ \beta \in W_{n} E^{*} v } d_{ E } (  s_{E}( \beta) ) \\
&= \sum_{ e \in E^{1} v } d_{ E } ( s_{E}(e) ) \leq d_{E} ( v ).
\end{align*}
Therefore, we have equality throughout and $d_{E}(v) = \sum_{ e \in E^{1} v } d_{ E } ( s_{E}(e) )$.  By induction \eqref{AF-unital-eq} holds for all $v \in \bigsqcup_{n=2}^\infty W_n$.
\end{proof}

\begin{lemma}\label{l:bdmatrix}
Let $(E, d_{E} )$ be a Bratteli diagram for $\mathsf{M}_{k}$ with $E^{0}$ partitioned into levels as $E^{0} = \bigsqcup_{ n = 1}^{ \infty } W_{n}$.  Then there exists $m \in \N$ such that for each $n \geq m$, $W_{n} = \{ w_{n} \}$ is a singleton set and $d_{E} (  w_{n} ) = k$.
\end{lemma}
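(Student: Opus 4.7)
The plan is to exploit the finite-dimensionality of $\mathsf{M}_k$ to force the Bratteli diagram to stabilize after finitely many levels. For each $n$ I set $A_n := \bigoplus_{v \in W_n} \mathsf{M}_{d_E(v)}$; the edges of $E$ determine a $*$-homomorphism $\phi_n \colon A_n \to A_{n+1}$ in which each summand $\mathsf{M}_{d_E(v)}$ is embedded into $\mathsf{M}_{d_E(w)}$ with multiplicity $|vE^1w|$, with condition~(4) of the Bratteli diagram definition ensuring this is well-defined, and $\mathsf{M}_k = \varinjlim (A_n, \phi_n)$. Because $E$ has no sinks (condition~(1)), every $v \in W_n$ emits some edge, so each summand of $A_n$ is embedded with positive multiplicity into at least one summand of $A_{n+1}$, forcing $\phi_n$ to be \emph{injective}. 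Thus each $A_n$ embeds canonically into the limit $\mathsf{M}_k$.

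The next step is a dimension bound. Since $A_n$ injects into $\mathsf{M}_k$, we have $\dim_{\C}(A_n) = \sum_{v \in W_n} d_E(v)^2 \leq k^2$. The sequence $(\dim A_n)_{n \geq 1}$ is non-decreasing (because each $\phi_n$ is injective) and bounded by $k^2$, hence eventually constant; choose $m$ so that $\dim A_n$ is constant for all $n \geq m$. For $n \geq m$, $\phi_n$ is an injection between finite-dimensional complex vector spaces of the same dimension, hence a bijection, so in particular a $*$-isomorphism. It follows that the canonical map $A_m \to \varinjlim A_n$ is an isomorphism, so $A_m \cong \mathsf{M}_k$, and in particular $\dim A_m = k^2$.

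Finally, the uniqueness of the Wedderburn--Artin decomposition of a finite-dimensional $C^{*}$-algebra as a direct sum of matrix algebras, applied to $A_m = \bigoplus_{v \in W_m} \mathsf{M}_{d_E(v)} \cong \mathsf{M}_k$, forces $W_m = \{w_m\}$ to be a singleton with $d_E(w_m) = k$. Applying the same observation at each level $n \geq m$ (noting $A_n \cong A_m \cong \mathsf{M}_k$ via the composition of the subsequent isomorphisms $\phi_m, \phi_{m+1}, \ldots$) gives $W_n = \{w_n\}$ and $d_E(w_n) = k$ for every $n \geq m$, as required. The one point that requires care is the injectivity of the connecting maps $\phi_n$, which is what lets both the dimension bound and the eventual stability upgrade to an honest isomorphism $A_m \cong \mathsf{M}_k$; this is precisely where the no-sinks hypothesis is used.
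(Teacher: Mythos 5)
Your argument is correct and is essentially the paper's dimension-counting proof: both rest on the fact that the finite-dimensional algebras $\bigoplus_{v\in W_n}\mathsf{M}_{d_E(v)}$ embed (compatibly) into the $k^2$-dimensional algebra $\mathsf{M}_k$, so their dimensions are bounded and must stabilize. The only difference is in the finish: the paper concludes from the arithmetic identity $\bigl(\sum_{v\in W_n} d_E(v)\bigr)^2 = k^2 = \sum_{v\in W_n} d_E(v)^2$, whereas you upgrade the eventually bijective connecting maps to an isomorphism $A_m\cong\mathsf{M}_k$ and invoke simplicity of $\mathsf{M}_k$; your explicit use of the no-sink condition to get injectivity of the connecting maps is a detail the paper leaves implicit but needs as well.
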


\begin{proof}
Writing $\mathsf{M}_{k}$ as the direct limit coming from the Bratteli diagram, there exists an increasing sequence of finite-dimensional $C^{*}$-subalgebras $\mathfrak{A}_{n}$ of $\mathsf{M}_{k}$, and hence there exists $N \in \N$ such that $1_{ \mathfrak{A}_{n} } = 1_{ \mathsf{M}_{k} }$ for all $n \geq N$.  Thus, for each $n \geq N$, $k = \sum_{ v \in W_{n} } d_{E}(v)$.  Since $\mathsf{M}_{k} = \overline{ \bigcup_{ n = 1}^{ \infty } \mathfrak{A}_{n} }$, there exists $m \geq N$ such that for each $n \geq m$, $\mathrm{dim}_{\C} ( \mathsf{M}_{k} ) = \dim_{ \C } ( \mathfrak{A}_{n} )$.  Therefore, for each $n \geq m$, 
\begin{align*}
\left( \sum_{ v \in W_{n} } d_{E}(v) \right)^{2}&= k^{2} = \dim_{ \C } ( \mathsf{M}_{k} ) = \dim_{ \C } ( \mathfrak{A}_{n} ) = \sum_{ v \in W_{n} } d_{E}(v)^{2}.
\end{align*}
Since the $d_E(v)$ are non-negative integers, it follows that $W_{n} = \{ w_{n} \}$ is a singleton set and $d_{E}(w_{n}) = k$.
\end{proof}

\begin{defin}
Let $( E, d_{E} )$ be a Bratteli diagram. A saturated, hereditary subset $H$ of $E^{0}$ is a \emph{largest saturated, hereditary} subset of $E^{0}$ if whenever $Y$ is a saturated, hereditary subset  of $E^{0}$, then either $Y \subseteq H$ or $Y = E^{0}$.  
\end{defin}

\begin{defin}
Let $\mathfrak{A}$ be a $C^{*}$-algebra.  An ideal $\mathfrak{I}$ of $\mathfrak{A}$ is \emph{essential} if for every nonzero ideal $\mathfrak{K}$ of $\mathfrak{A}$, $\mathfrak{I} \cap \mathfrak{K} \neq 0$.  An ideal $\mathfrak{I}$ of $\mathfrak{A}$ is a \emph{largest ideal} of $\mathfrak{A}$ if whenever $\mathfrak{K}$ is an ideal  of $\mathfrak{A}$, then either $\mathfrak{K} \subseteq \mathfrak{I}$ or $\mathfrak{K} = \mathfrak{A}$. 
\end{defin}

\begin{remar}
Note that if $\mathfrak{I}$ is a largest ideal of $\mathfrak{A}$, then $\mathfrak{I}$ is unique, $\mathfrak{I}$ is an essential ideal of $\mathfrak{A}$, and $\mathfrak{A} / \mathfrak{I}$ is a simple $C^{*}$-algebra.
\end{remar}



\begin{defin} \label{Mk-separated-def}
For any $k \in \N$, we say a Bratteli diagram $(E,d_E)$ is \emph{$\mathsf{M}_{k}$-separated} if it satisfies the following five properties:
\begin{itemize}
\item[(1)] $E^{0} = \bigsqcup_{ n = 1}^{ \infty } W_{n}$ is partitioned into levels with $W_{n} = H_{n} \sqcup \{ y_{n} \}$.

\item[(2)] $H_{n} E^{*} y_{n+1} = \emptyset$ for all $n \in \N$.
 
\item[(3)] $d_{E} ( y_{n} ) = k$ for all $n \in \N$.

\item[(4)] $| y_{n} E^{*} y_{n+1} | = 1$ for all $n \in \N$.

\item[(5)] $y_{n} E^{*} H_{n+1} \neq \emptyset$ for all $n \in \N$.
\end{itemize}
In addition, we say that a Bratteli diagram $(E,d_E)$ is \emph{properly $\mathsf{M}_{k}$-separated} if it is $\mathsf{M}_{k}$-separated and satisfies the additional property:

\begin{itemize}
\item[(6)] For each $n \in \N$ and $v \in H_{n}$ we have
\begin{align*}
d_{E} ( v ) > \sum_{ e \in E^{1}v} d_{ E } ( s_{E}(e) ).
\end{align*}
\end{itemize}
\end{defin}

\begin{remar}
Note that if $(E,d_E)$ is an $\mathsf{M}_{k}$-separated Bratteli diagram, the set $H := \bigsqcup_{ n = 1}^{ \infty } H_{n}$ is a largest saturated hereditary subset of $E^{0}$.  In addition, if $\mathfrak{A}$ is the $C^*$-algebra associated with $(E,d_E)$, and $\mathfrak{I}$ is the ideal in $\mathfrak{A}$ associated with $H$, then $\mathfrak{I}$ is an essential ideal of $\mathfrak{A}$, and the quotient $\mathfrak{A}/\mathfrak{I}$ is an AF-algebra with Bratteli diagram $k \to k \to k \to \ldots$, so that $\mathfrak{A}/\mathfrak{I} \cong \mathsf{M}_k$.  The following lemma shows that, conversely, any AF-algebra with $\mathsf{M}_k$ as a quotient by an essential ideal has an $\mathsf{M}_k$-separated Bratteli diagram.
\end{remar}

\begin{lemma}\label{l:bdideal1}
Let $\mathfrak{A}$ be an AF-algebra with an essential ideal $\mathfrak{I}$ such that $\mathfrak{A} / \mathfrak{I} \cong \mathsf{M}_{k}$.  Then any Bratteli diagram for $\mathfrak{A}$ can be telescoped to an $\mathsf{M}_k$-separated Bratteli diagram.
\end{lemma}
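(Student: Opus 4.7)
The plan is to take any Bratteli diagram $(E,d_{E})$ for $\mathfrak{A}$ with levels $E^{0}=\bigsqcup_{n=1}^{\infty}W_{n}$, identify the saturated hereditary subset $H\subseteq E^{0}$ corresponding to $\mathfrak{I}$, and pass to a carefully chosen telescope that simultaneously realizes all five conditions of Definition~\ref{Mk-separated-def}. Because $\mathfrak{I}$ is an ideal of the AF-algebra $\mathfrak{A}$, such an $H$ exists and the quotient $\mathfrak{A}/\mathfrak{I}\cong \mathsf{M}_{k}$ is described by the induced Bratteli diagram on $E^{0}\setminus H$ with inherited edges and degrees.

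I would first apply Lemma~\ref{l:bdmatrix} to the quotient diagram to obtain $m\in \N$ such that for every $n\geq m$, the set $W_{n}\setminus H$ is a singleton $\{y_{n}\}$ with $d_{E}(y_{n})=k$. After possibly increasing $m$, I would argue that $|y_{n}E^{1}y_{n+1}|=1$ for all $n\geq m$, using the Bratteli inequality $d_{E}(y_{n+1})\geq |y_{n}E^{1}y_{n+1}|\cdot d_{E}(y_{n})$ (both sides equal to $k$) to rule out multiplicity greater than one, together with the observation that multiplicity zero at infinitely many levels would decouple the chain $\mathsf{M}_{k}\to\mathsf{M}_{k}\to\cdots$ in the quotient and prevent its inductive limit from being $\mathsf{M}_{k}$.

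Next, I would use essentiality of $\mathfrak{I}$ to show that every $y_{n}$ reaches a vertex of $H$ in finitely many steps. Suppose some $y_{n}$ does not; then the hereditary set $S=\{w\in E^{0}:y_{n}\geq w\}$ is disjoint from $H$. I would argue its saturation also remains disjoint from $H$: if some $v\in H$ were added during saturation, then the ranges of the edges out of $v$ would lie in the current saturation, but hereditariness of $H$ forces those same ranges to lie in $H$, hence in the empty intersection, making $v$ a sink, contrary to the no-sinks axiom. Thus the saturated hereditary closure of $\{y_{n}\}$ is nonempty and disjoint from $H$, giving a nonzero ideal of $\mathfrak{A}$ meeting $\mathfrak{I}$ trivially, contradicting essentiality.

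Finally, I would construct the telescope inductively: set $n_{1}\geq m$ and, given $n_{1}<\cdots<n_{l}$, pick $n_{l+1}>n_{l}$ large enough that $y_{n_{l}}$ admits a path in $E$ to some vertex of $H\cap W_{n_{l+1}}$. In the telescoped diagram, properties~(1) and (3) of Definition~\ref{Mk-separated-def} hold by construction; property~(2) is automatic since hereditariness of $H$ precludes edges from $H_{n}$ landing on $y_{n_{l+1}}\notin H$; property~(5) holds by the inductive choice of $n_{l+1}$; and property~(4) holds because any path in $E$ from $y_{n_{l}}$ to $y_{n_{l+1}}$ must avoid $H$ (by applying the hereditary contrapositive to the terminal vertex and working backwards), so it is forced through the singleton, multiplicity-one quotient chain, yielding exactly one such path. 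The main obstacle is the essentiality-plus-saturation argument of the third paragraph, together with pinning down the singleton multiplicity-one structure in the quotient that makes property~(4) automatic.
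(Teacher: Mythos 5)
Your proposal is correct and follows essentially the same route as the paper: identify the saturated hereditary set for $\mathfrak{I}$, apply Lemma~\ref{l:bdmatrix} to the quotient diagram to get singleton levels of degree $k$, use essentiality plus a saturation argument to guarantee paths from the $y_n$'s into $H$, note that hereditariness rules out paths from $H$ back to the $y_n$'s, and telescope along a suitable subsequence. The only differences are cosmetic — you extract from essentiality that every $y_n$ reaches $H$ by a path (the paper instead shows infinitely many $x_n$ emit an edge directly into the ideal set), and you verify the multiplicity-one condition~(4) more explicitly than the paper does — but these are variations within the same argument.
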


\begin{proof}
Let $(E, d_{E})$ be a Bratteli diagram of $\mathfrak{A}$ with $E^{0} = \bigsqcup_{ n = 1}^{ \infty } V_{n}$ partitioned into levels, and let $S$ be the hereditary saturated subset of $E^{0}$ that corresponds to the ideal $\mathfrak{I}$.  Then the Bratteli diagram obtained by restricting to $E^{0} \setminus S = \bigsqcup_{ n = 1}^{ \infty } ( V_{n} \setminus S )$ is a Bratteli diagram for $\mathfrak{A} / \mathfrak{I}$.  Since $\mathfrak{A} / \mathfrak{I} \cong \mathsf{M}_{k}$, by Lemma~\ref{l:bdmatrix} there exists $m \in \N$ such that for each $n \geq m$, $| V_{n} \setminus S | = 1$ and $d_{ E} (x_{n} ) = k$, where $\{ x_{n} \} = V_{n} \setminus S$.  Note that for each $n \geq m$, $V_{n} = ( V_{n} \cap S ) \sqcup \{ x_{n} \}$.  To obtain the result, we shall establish two claims.

\emph{Claim 1:} There exists infinitely many $n$ such that $x_{n} E^{1} (V_{n+1} \cap S) \neq \emptyset$.  Suppose not.  Then there exists $N \geq m$ such that for all $n \geq N$, $x_{n} E^{1} (V_{n+1} \cap S ) = \emptyset$ and $x_{N} E^{1} ( V_{N+1} \cap S ) \neq \emptyset$.  Then $\bigcup_{ n = N+1}^{\infty} \{ x_{n} \}$ is a saturated, hereditary subset of $E^{0}$ disjoint from $S$, which corresponds to a nonzero ideal $\mathfrak{K}$ of $\mathfrak{A}$ such that $\mathfrak{I} \cap \mathfrak{K} = 0$, contradicting the fact that $\mathfrak{I}$ is an essential ideal of $\mathfrak{A}$. 

\emph{Claim 2:} There exists $m_{1} \geq m$ such that for all $n \geq m_{1}$ we have $( V_{n} \cap S ) E^{1} x_{n+1} = \emptyset$.  Suppose not.  Then for each $m_{1} \geq m$, there exists $n \geq m_{1}$ such that $( V_{n} \cap S ) E^{1} x_{n+1} \neq \emptyset$.  Since $S$ is a saturated, hereditary subset of $E^{0}$, this would imply that $\bigsqcup_{ n = m_{1}}^{ \infty } V_{n} \subseteq S$.  Hence, $\mathfrak{I} = \mathfrak{A}$ contradicting that fact that $\mathfrak{A} / \mathfrak{I} \cong \mathsf{M}_{k}$. 

By Claim 1 and Claim 2, there exists a subsequence $\{ k(n) \}_{ n = 1}^{ \infty }$ of $\setof{ n \in \N }{ n \geq m_{1} }$ such that $x_{k(n)} E^{*} (V_{k(n+1)} \cap S) \neq \emptyset$ and $( V_{k(n) }  \cap S ) E^{*} x_{k(n+1)} = \emptyset$.  Telescope $(E, d_{E})$ to $\bigsqcup_{ n = 1}^{ \infty } V_{k(n) }$.  Then we get a Bratteli diagram $( F, d_{F} )$ with $F^{0} = \bigsqcup_{ n = 1}^{ \infty } W_{n}$ and $W_{n} = U_{n} \sqcup \{ t_{n}  \}$, where $U_{n} := V_{k(n)} \cap S$, and $t_{n} := x_{k(n)}$.  We see that $( F, d_{F} )$ satisfies properties (1)--(5) of Definition~\ref{Mk-separated-def}.
\end{proof}

\begin{lemma}\label{l:bdnonunit}
Let $\mathfrak{A}$ be an AF-algebra with a largest ideal $\mathfrak{I}$ such that $\mathfrak{A}/\mathfrak{I} \cong \mathsf{M}_k$ and $\mathfrak{A}/\mathfrak{I}$ is the only unital quotient of $\mathfrak{A}$.  Then there exists an $\mathsf{M}_k$-separated Bratteli diagram $(F,d_F)$ for $\mathfrak{A}$ with $F^0 := \bigsqcup_{n=1}^\infty W_n$ partitioned into levels for which $W_n = U_n \cup \{ t_n \}$ satisfies Properties (1)--(5) of Definition~\ref{Mk-separated-def} and also satisfies the additional property:  
\begin{itemize} 
\item[(6')] For every $n \geq 2$ and for every $v \in U_n$ either
$$d_{F} ( v ) > \sum_{ \alpha \in W_{m} F^{*} v } d_{ F } ( s_{F}( \alpha ) ) \qquad \text{or}  \qquad d_{F}(v) = \sum_{ \alpha \in t_{m} F^{*} v} d_{ F} ( t_{m} ).$$
\end{itemize}
\end{lemma}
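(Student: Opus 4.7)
The plan is to start from the $\mathsf{M}_{k}$-separated Bratteli diagram $(E, d_{E})$ for $\mathfrak{A}$ furnished by Lemma~\ref{l:bdideal1}, with $E^{0} = \bigsqcup_{n=1}^{\infty} V_{n}$ and $V_{n} = H_{n} \sqcup \{y_{n}\}$, and to construct $(F, d_{F})$ as a carefully chosen telescope of $(E, d_{E})$. To each $v \in H := \bigsqcup_{n} H_{n}$ I associate two invariants: the smallest level $n^{E}(v)$ from which $v$ is fully saturated, and the smallest level $k^{E}(v)$ with $H_{k^{E}(v)} E^{*} v \neq \emptyset$. The opening calculation in the proof of Lemma~\ref{l:sumproj} shows that saturation of $v$ from a level propagates to every higher level up to the level of $v$; and property~(2) of the $\mathsf{M}_{k}$-separated diagram forces any path ending in $H$ to have all of its vertices in $H$, so $H_{k} E^{*} v \neq \emptyset$ for every $k$ between $k^{E}(v)$ and the level of $v$. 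Writing $M^{E}(v) := \max(n^{E}(v), k^{E}(v))$, a direct unpacking then shows that property~(6') at a vertex $v \in U_{j} = H_{n_{j}}$ of the telescope, with $m$ ranging over $1, \ldots, j-1$, is equivalent to the single inequality $n_{j-1} < M^{E}(v)$: for each such $m$, either $n_{m} < n^{E}(v)$, so that $v$ is not saturated from $V_{n_{m}}$ (the excess alternative), or $n_{m} < k^{E}(v)$, so that $V_{n_{m}} E^{*} v = y_{n_{m}} E^{*} v$ and saturation of $v$ from $V_{n_{m}}$ provides the second alternative.

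The whole construction therefore reduces to the following key claim: \emph{for every $N \in \mathbb{N}$ there exists $N'$ such that $M^{E}(v) > N$ for every $n \geq N'$ and every $v \in H_{n}$}. Granting this, I define $F$ by choosing $n_{1}$ arbitrarily and then $n_{j}$ recursively so large that $M^{E}(v) > n_{j-1}$ for all $v \in H_{n_{j}}$; property~(6') is then immediate, and properties~(1)--(5) of Definition~\ref{Mk-separated-def} are routinely inherited from $(E, d_{E})$. For property~(5), one uses that $H$ is forward-hereditary and that Bratteli diagrams have no sinks, so any edge $y_{n_{j}} \to u' \in H_{n_{j}+1}$ supplied by property~(5) in $E$ can be continued through $H$ to reach some $u \in H_{n_{j+1}}$.

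I plan to prove the key claim by contradiction. If it fails for some $N$, then infinitely many indices $n$ admit vertices $v_{n} \in H_{n}$ with $n^{E}(v_{n}), k^{E}(v_{n}) \leq N$. Since $\bigsqcup_{k \leq N} V_{k}$ is finite, I may pass to a subsequence on which $n^{E}(v_{n})$ and $k^{E}(v_{n})$ stabilise, take their maximum $M \leq N$ (so that each $v_{n}$ is saturated from level $M$ and some vertex in $H_{M}$ reaches $v_{n}$, by the propagation statements above), and further fix a single $u^{*} \in H_{M}$ with $u^{*} \geq v_{n}$ for every $n$ in the subsequence. Define
\[
S := \{u \in H : u \not\geq v_{n} \text{ for every } n \text{ in the subsequence}\}.
\]
A direct check shows $S$ is hereditary (if $u \to u'$ and $u' \geq v_{n}$, then $u \geq v_{n}$) and saturated (each vertex in a Bratteli diagram is non-singular and has finitely many outgoing edges, so its reach-set is the union of the reach-sets of its immediate successors); since $u^{*} \notin S$, the inclusion $S \subsetneq H$ is proper, and the ideal of $\mathfrak{A}$ corresponding to $S$ is a proper subideal of $\mathfrak{I}$. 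I then claim the resulting quotient of $\mathfrak{A}$ is unital with unit realised at level $M$: for $n \geq M$ and $v \in V_{n} \setminus S$, either $v = y_{n}$, which is saturated from level $M$ because the unique path from $y_{M}$ to $y_{n}$ guaranteed by properties~(2)--(4) contributes exactly $k = d_{E}(v)$, or $v \in H_{n} \setminus S$, in which case $v \geq v_{n'}$ for some $n'$ in the subsequence, and the first part of Lemma~\ref{l:sumproj} lifts saturation from level $M$ at $v_{n'}$ to saturation from level $M$ at $v$. Since $S$ is hereditary, path sums in $E$ agree with those in the restricted diagram, so this truly witnesses unitality of the quotient --- contradicting the assumption that $\mathfrak{A}/\mathfrak{I}$ is the only unital quotient of $\mathfrak{A}$.

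The principal obstacle is this last step --- pinpointing the correct saturated hereditary subset $S \subsetneq H$ and verifying that the restricted diagram is saturated from a common level $M$ at every surviving vertex. The argument relies essentially on the rigid structure of the $y$-chain provided by properties~(2)--(4) of the $\mathsf{M}_{k}$-separated diagram, together with the propagation principle in Lemma~\ref{l:sumproj}.
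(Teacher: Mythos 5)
Your overall strategy is the one the paper uses: assume the dichotomy fails along infinitely many levels, extract ``bad'' vertices that are saturated from a bounded level and reachable from the $H$-part of that level, and contradict the uniqueness of the unital quotient by producing a second unital quotient via Lemma~\ref{l:sumproj}. Your reduction to the key claim via the invariants $n^{E}(v)$, $k^{E}(v)$ is correct, and the telescope does inherit properties (1)--(5) of Definition~\ref{Mk-separated-def}. However, there is a genuine flaw at precisely the step you flag as the crux: the set $S=\{u\in H:\ u\not\geq v_{n}\text{ for every }n\text{ in the subsequence}\}$ need not be saturated. Your justification overlooks that the reach-set of a vertex is the vertex itself together with the reach-sets of its immediate successors. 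Concretely, if $u=v_{n_{0}}$ for some $n_{0}$ in the subsequence and no successor of $v_{n_{0}}$ reaches any $v_{n'}$ with $n'$ in the subsequence (nothing in your setup forbids this: the $v_{n}$ sit at different levels and need not be connected to one another), then $u\notin S$ while all of $r_{E}(s_{E}^{-1}(u))$ lies in $S$, so $S$ is not saturated and does not correspond to an ideal; passing to its saturated closure could in principle absorb $u^{*}$ and ruin both the properness of the ideal and your unitality argument.

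The repair is exactly the device the paper builds into its set $T=\{w:\ w\geq Y_{n}\text{ for infinitely many }n\}$: define instead $S:=\{u\in H:\ u\geq v_{n}\text{ for at most finitely many }n\text{ in the subsequence}\}$. This $S$ is hereditary, and it is saturated because every vertex of a Bratteli diagram emits only finitely many edges, so if each successor reaches only finitely many $v_{n}$ then so does the vertex (the vertex itself contributes at most one further index). Moreover $u^{*}\notin S$, and every $u\in H\setminus S$ still reaches some $v_{n'}$, so your lifting of saturation from level $M$ via the first part of Lemma~\ref{l:sumproj}, and hence the unitality of the quotient, go through unchanged. (A smaller misstatement: property (2) does not force every path \emph{ending} in $H$ to lie entirely in $H$ --- by property (5) such paths may start on the $y$-chain; what it yields, and what you actually need, is that paths \emph{starting} in $H$ stay in $H$, so a path from $H_{k^{E}(v)}$ to $v$ meets $H_{j}$ at every intermediate level $j$.) With these corrections your argument is sound, and it is essentially the paper's proof reorganized around your telescoping key claim in place of the paper's intermediate condition (6'').
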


\begin{proof}
Note that it suffices to show that $\mathfrak{A}$ has an $\mathsf{M}_k$-separated Bratteli diagram satisfying the following condition:
\begin{itemize}
\item[(6'')] For every $m \in \N$, there exists $n \geq m$ such that for every $v \in U_{n}$ either
$$d_{F} ( v ) > \sum_{ \alpha \in W_{m} F^{*} v } d_{ F } ( s_{F}( \alpha ) ) \qquad \text{or}  \qquad d_{F}(v) = \sum_{ \alpha \in t_{m} F^{*} v} d_{ F} ( t_{m} )$$
\end{itemize}
since any such Bratteli diagram can be telescoped to an  $\mathsf{M}_k$-separated Bratteli diagram satisfying Property~(6') in the statement of the lemma.

Since any largest ideal is also an essential ideal, Lemma~\ref{l:bdideal1} implies that there is an $\mathsf{M}_k$-separated Bratteli diagram $(F, d_F)$ for $\mathfrak{A}$.  Suppose $F^0 := \bigsqcup_{n=1}^\infty W_n$ is partitioned into levels with $W_n = U_n \cup \{ t_n \}$.  We shall show that $(F, d_F)$ satisfies Property~(6'') above.  We establish this through proof by contradiction.  To this end, suppose there exists $m \in \N$ such that for each $n \geq m$, the set
\begin{align*}
Y_{n} := \setof{ x \in W_{n} }{ \text{$d_{F}(x) = \sum_{ \alpha \in W_{m} F^{*} x} d_{ F } ( s_{F}( \alpha ) )$ and $d_{F} ( x ) \neq \sum_{ \alpha \in t_{m} F^{*} x} d_{F}( t_{m} ) $}}
\end{align*}
is nonempty.  Without loss of generality, we may assume that $m = 1$.  Set 
\begin{align*}
T = \setof{ w \in F^{0} }{ \text{there exist infinitely many $n \in \N$ such that $w \geq Y_n$} }.
\end{align*}
Then $F^{0} \setminus T$ is a saturated hereditary subset of $F^{0}$.  In addition, since $\mathfrak{A}$ contains a largest ideal, and $U := \bigcup_{n=1}^\infty U_n$ is a saturated hereditary subset of $F^0$ not contained in any proper saturated hereditary subset of $F^0$, it follows that $U$ is the saturated hereditary subset corresponding to $\mathfrak{I}$ and $U$ is a largest saturated hereditary subset of $F^{0}$.  Therefore, $F^{0} \setminus T \subseteq U$ or $F^{0} \setminus T = F^{0}$.  We shall show that it must be the case that $F^{0} \setminus T \subseteq U$ by proving that $T \neq \emptyset$.

We claim that $T \cap U \neq \emptyset$.  Suppose $T \cap U = \emptyset$.  Then, $U \subseteq F^{0} \setminus T$.  Therefore, for every $v \in U$, there exists $n_{v} \in \N$ such that for each $n \geq n_{v}$, there are no paths from $v$ to $Y_{n}$.  Set $N = \max \setof{ n_{v} }{ v \in U_{1} }$.  Then for each $n \geq N$ we have $U_{1} F^{*} Y_{n} = \emptyset$.  Let $x \in Y_{N}$.  Then $d_{F}(x) = \sum_{ \alpha \in W_{1} F^{*} x} d_{ F } ( s_{F}( \alpha ) )$ and $d_{F}(x) \neq \sum_{ \alpha \in t_{1} F^{*} x} d_{F}( t_{1} )$.  Note that $x \neq t_{N}$ since $\sum_{ \alpha \in t_{1} F^{*} t_{N} } d_{F} ( t_{1} ) = d_{  F } (t_{1} ) = k = d_{F} (x)$.  Therefore, $x \in U_{N}$.  Since $0 \neq d_{F}(x) = \sum_{ \alpha \in W_{1} F^{*} x} d_{ F } ( s_{F}( \alpha ) )$, we have that $W_{1} F^{*} x \neq \emptyset$.  Since $U_{1} F^{*} Y_{N} = \emptyset$, it follows that $U_{1} F^{*} x = \emptyset$.  Hence
\begin{align*}
d_{F } ( x ) = \sum_{ \alpha \in W_{1} F^{*} x } d_{ F } ( s_{F}( \alpha ) )= \sum_{ \alpha \in t_{1} F^{*} x } d_{ F } ( t_{1}  ),
\end{align*}
which contradicts the assumption that $x \in Y_{N}$.  Therefore, $T \cap H \neq \emptyset$. 

Since $T \cap H \neq \emptyset$, we have that $T \neq \emptyset$ and $F^{0} \setminus T \neq F^{0}$.  Hence it must be the case that $F^{0} \setminus T \subseteq H$.  Since $T \cap H \neq \emptyset$, we have that $F^{0} \setminus T \neq H$.  Let $\mathfrak{K}$ be the ideal of $\mathfrak{A}$ corresponding to $F^{0} \setminus T$.  Then $\mathfrak{K} \neq \mathfrak{I}$ and $\mathfrak{A} / \mathfrak{K}$ has a Bratteli diagram obtained by restricting to the vertices of $T$.  By Lemma \ref{l:sumproj}, $\mathfrak{A} /\mathfrak{K}$ is a unital $C^{*}$-algebra, and since $\mathfrak{K} \neq \mathfrak{I}$, this contradicts the fact that $\mathfrak{A} / \mathfrak{I}$ is the only unital quotient of $\mathfrak{A}$.  Hence the lemma holds.
\end{proof}

\begin{lemma}\label{l:bdfinal}
Let $\mathfrak{A}$ be an AF-algebra with a largest ideal $\mathfrak{I}$ such that $\mathfrak{A}/\mathfrak{I} \cong \mathsf{M}_k$ and $\mathfrak{A}/\mathfrak{I}$ is the only unital quotient of $\mathfrak{A}$.  Then there exists a proper $\mathsf{M}_k$-separated Bratteli diagram for $\mathfrak{A}$.
\end{lemma}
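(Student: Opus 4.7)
My plan is to begin with the $\mathsf{M}_k$-separated Bratteli diagram $(F, d_F)$ produced by Lemma~\ref{l:bdnonunit}, which already satisfies properties (1)--(5) of Definition~\ref{Mk-separated-def} together with the weakened version (6') of Property~(6).  The only obstruction to (6) is the existence of ``degenerate'' vertices $v \in U_n$ (for $n \geq 2$) realizing the second alternative of (6'), namely $d_F(v) = k \cdot |t_{n-1} F^1 v|$; at such a vertex, equality holds in the Bratteli condition and the incoming mass comes entirely from the $t$-chain.  My plan is to further telescope $(F, d_F)$ in order to eliminate these vertices.

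The key claim I would prove is a strengthening of condition (6'') from the proof of Lemma~\ref{l:bdnonunit}: for every $m \in \mathbb{N}$, there exists $n \geq m$ such that \emph{every} $v \in U_n$ satisfies the strict inequality
\[
d_F(v) > \sum_{\alpha \in W_m F^* v} d_F(s_F(\alpha)).
\]
Once this claim is established, I would select levels $n_1 < n_2 < \cdots$ inductively, with $n_{i+1}$ chosen via the claim for $m = n_i$, and telescope $(F, d_F)$ to these levels.  Since properties (1)--(5) are preserved under telescoping (as was used already in the proof of Lemma~\ref{l:bdnonunit}), the resulting Bratteli diagram would be the desired proper $\mathsf{M}_k$-separated Bratteli diagram.

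The proof of the claim would proceed by contradiction, in the spirit of Lemma~\ref{l:bdnonunit}.  If the claim fails for some $m_0 \in \mathbb{N}$, then the set
\[
\tilde Y_n := \setof{v \in U_n}{d_F(v) = \sum_{\alpha \in W_{m_0} F^* v} d_F(s_F(\alpha))}
\]
is nonempty for every $n \geq m_0$.  Setting $\tilde T := \setof{w \in F^0}{w \geq \tilde Y_n \text{ for infinitely many } n \geq m_0}$, one would show, exactly as in Lemma~\ref{l:bdnonunit}, that $F^0 \setminus \tilde T$ is a saturated hereditary subset of $F^0$.  The corresponding quotient $\mathfrak{A}/\mathfrak{K}$ is then unital by Lemma~\ref{l:sumproj}, and the crucial step is to show that $\mathfrak{K} \neq \mathfrak{I}$, which contradicts the hypothesis that $\mathfrak{A}/\mathfrak{I}$ is the only unital quotient of $\mathfrak{A}$.

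The main obstacle, and where the present argument is more delicate than that of Lemma~\ref{l:bdnonunit}, will lie in establishing $\tilde T \cap U \neq \emptyset$ (which is what forces $\mathfrak{K} \subsetneq \mathfrak{I}$).  The enlarged set $\tilde Y_n$ used here does \emph{not} exclude the purely $t$-derived vertices (which were excluded in the smaller set $Y_n$ of Lemma~\ref{l:bdnonunit}), so it may a priori consist entirely of vertices whose only $W_{m_0}$-ancestors lie in the $t$-chain.  I anticipate that the resolution will require combining the dichotomy (6') applied at successive intermediate levels with a König's lemma / pigeonhole argument on the (finite) set $U_{m_0}$, in order to locate a $U$-vertex at level $m_0$ possessing infinitely many $\tilde Y_n$-descendants and thereby populating $\tilde T \cap U$.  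This step is the crux of the proof.
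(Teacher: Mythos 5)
There is a genuine gap: the key claim you propose (for every $m$ there is a level $n \geq m$ at which \emph{every} $v \in U_n$ satisfies $d_F(v) > \sum_{\alpha \in W_m F^* v} d_F(s_F(\alpha))$) is false, and with it the whole strategy of telescoping $(F,d_F)$ alone. The paper's own illustrative example, displayed just before the proof of Lemma~\ref{l:bdfinal}, is a counterexample: in that $\mathsf{M}_1$-separated diagram every level $n \geq 2$ contains a vertex $v_n$ of degree $2$ whose only incoming edges are two edges from $t_{n-1}$. Since $U$-vertices never feed the $t$-chain and $|t_m F^* t_{n-1}| = 1$ (Properties (2) and (4) of Definition~\ref{Mk-separated-def}), every path from $W_m$ into $v_n$ has source $t_m$, so for every $m$ and every $n > m$ one gets $\sum_{\alpha \in W_m F^* v_n} d_F(s_F(\alpha)) = 2\, d_F(t_m) = 2 = d_F(v_n)$. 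Thus your set $\tilde Y_n$ is nonempty for all $n$ and all $m$, even though the algebra of this diagram does satisfy the hypotheses of the lemma (its only proper nonzero saturated hereditary subset is $U$, the quotient is $\C$, and the algebra is nonunital). In particular no telescope of this $(F,d_F)$ is proper: after passing to any subsequence of levels, the vertex $v_{n_{i+1}}$ still receives total incoming degree exactly $2 = d_F(v_{n_{i+1}})$ from level $n_i$. The K\"onig/pigeonhole repair you anticipate cannot succeed either: in this example $\tilde T$ is exactly the $t$-chain, so $F^0 \setminus \tilde T = U$, the ideal $\mathfrak{K}$ equals $\mathfrak{I}$, and the unital quotient supplied by Lemma~\ref{l:sumproj} is just $\mathfrak{A}/\mathfrak{I}$ --- no contradiction. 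This is precisely why the set $Y_n$ in Lemma~\ref{l:bdnonunit} excludes the purely $t$-fed vertices; once they are admitted, the contradiction scheme collapses.

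The missing idea is that the degenerate vertices (your ``second alternative'' vertices, the sets $A_n$) must be surgically removed from the diagram rather than avoided by a choice of levels. The paper deletes $A_{2n}$ at the even levels and replaces each length-two path $t_{2n-1} \to v \to w$ with $v \in A_{2n}$, $w \in U_{2n+1}$ by $p(v,w)$ new edges from $t_{2n}$ to $w$; this is legitimate because $d_F(v) = |t_{2n-1}F^1 v|\, d_F(t_{2n-1})$ and $d_F(t_{2n-1}) = d_F(t_{2n}) = k$. The resulting diagram $(B,d_B)$ is \emph{not} a telescope of $(F,d_F)$; equivalence is established by checking that restricting either diagram to the odd levels gives the same telescope $(A,d_A)$, so one uses the full equivalence relation generated by telescopes (a zig-zag), not telescopes of $F$ itself. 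Finally, telescoping $(B,d_B)$ to the even levels produces strict inequality at every remaining $U$-vertex, hence a proper $\mathsf{M}_k$-separated diagram. Your reduction of the problem to eliminating the second alternative of (6') is correct, but the elimination requires this rerouting construction, not a finer telescoping argument.
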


\begin{examp}
By Lemma~\ref{l:bdnonunit} it suffices to show that an $\mathsf{M}_k$-separated Bratteli diagram that satisfies Property~(6') of Lemma~\ref{l:bdnonunit} is equivalent to a proper $\mathsf{M}_k$-separated Bratteli diagram for $\mathfrak{A}$.  To help the reader follow the proof of Lemma~\ref{l:bdfinal}, we give an example to illustrate how the telescoping constructions in the proof are performed. 

Below are four Bratteli diagrams: $(F,d_F)$, $(A,d_A)$, $(B,d_B)$, and $(E,d_E)$. The Bratteli diagram $(F,d_F)$ is an $\mathsf{M}_1$-separated Bratteli diagram that satisfies Property~(6') of Lemma~\ref{l:bdnonunit}.  In addition $(E,d_E)$ is a proper $\mathsf{M}_1$-separated Bratteli diagram.  Telescoping the Bratteli diagrams $(F,d_F)$ and $(B,d_{B})$ at the odd levels, we obtain the Bratteli diagram $(A,d_{A})$.  Telescoping the Bratteli diagram $(B,d_{B})$ at the even levels gives the Bratteli diagram $(E, d_E)$.  Thus $(F,d_F)$ is equivalent to $(E,d_E)$.

\begin{center}
$ $ \xymatrix{
& 1 \ar[rd] 			 & 2 	\ar[rd]  & 2 \ar[rd] & 2 \ar[rd] & 2   &\dots & 2 & \dots \\
(F,d_F) \ \ & 1 \ar[r] 			 & 4  \ar[r]   & 8  \ar[r]  & 12  \ar[r]  & 16 & \dots& 4(n-1) & \dots \\
& 1 \ar@<.5ex>[ruu]  \ar[ruu] \ar[ur] \ar[r] & 1 \ar@<.5ex>[ruu]  \ar[ruu]  \ar[ur] \ar[r] & 1 \ar@<.5ex>[ruu]  \ar[ruu]  \ar[ur] \ar[r]  & 1 \ar@<.5ex>[ruu]  \ar[ruu]  \ar[ur] \ar[r]  & 1 &  \dots  & 1 & \dots
}
\end{center}

$ $

\begin{center}
$ $ \xymatrix{
& 1 \ar[rd] 			 & 2 	\ar[rd]  & 2 \ar[rd] & 2 \ar[rd] & 2 & \dots & 2 & \dots\\
(A,d_{A}) \ \ & 1 \ar[r] 			 & 8  \ar[r]   & 16  \ar[r]  & 24 \ar[r] & 32 & \dots & 8(n-1) & \dots \\
& 1   \ar[r]  \ar@<.5ex>[ruu]  \ar[ruu]  \ar@<.8ex>[ur] \ar@<.4ex>[ur] \ar[ur] \ar@<-.4ex>[ur] \ar[r]  & 1 \ar[r]  \ar@<.5ex>[ruu]  \ar[ruu]  \ar@<.8ex>[ur] \ar@<.4ex>[ur] \ar[ur] \ar@<-.4ex>[ur] \ar[r]  &1 \ar[r]  \ar@<.5ex>[ruu]  \ar[ruu]  \ar@<.8ex>[ur] \ar@<.4ex>[ur] \ar[ur] \ar@<-.4ex>[ur] \ar[r] & 1 \ar[r]  \ar@<.5ex>[ruu]  \ar[ruu]  \ar@<.8ex>[ur] \ar@<.4ex>[ur] \ar[ur] \ar@<-.4ex>[ur] \ar[r] & 1 &  \dots & 1 & \dots
}
\end{center}

$ $

\begin{center}
$ $ \xymatrix{
& 1 \ar[rd] 			 &   & 2 \ar[rd] &  & 2  &\dots & 2 & \dots \\
(B,d_{B}) \ \ & 1 \ar[r] 			 & 4  \ar[r]   & 8  \ar[r]  & 12  \ar[r]  & 16 & \dots & 4(n-1) & \dots \\
& 1   \ar[ur] \ar[r] & 1 \ar@<.5ex>[ruu]  \ar[ruu]  \ar@<.8ex>[ur] \ar@<.4ex>[ur] \ar[ur] \ar[r] & 1  \ar[ur] \ar[r]  & 1 \ar@<.5ex>[ruu]  \ar[ruu]  \ar[ur] \ar[r]  \ar@<.8ex>[ur] \ar@<.4ex>[ur] \ar[ur] & 1 &  \dots & 1 & \dots
}
\end{center}

$ $

\begin{center}
$ $ \xymatrix{
\quad (E,d_E) &  4 \ar[r] 			 & 12  \ar[r]   & 20 \ar[r]  & 28 \ar[r] & 36  & \dots  & 4+8(n-1) & \dots  \\
\quad & 1  \ar[ur]^-{6} \ar[r]  & 1 \ar[ur]^-{6}  \ar[r]    &1 \ar[ur]^-{6} \ar[r]   & 1\ar[ru]^-{6} \ar[r]   & 1 & \dots & 1 & \dots 
}
\end{center}

\end{examp}

\noindent \emph{Proof of Lemma~\ref{l:bdfinal}.}
By Lemma~\ref{l:bdnonunit} $\mathfrak{A}$ has an $\mathsf{M}_k$-separated Bratteli diagram $(F, d_F)$ that satisfies Property~(6') of Lemma~\ref{l:bdnonunit}.  We shall prove that $(F, d_F)$ is equivalent to a proper $\mathsf{M}_k$-separated Bratteli diagram for $\mathfrak{A}$.

For each $n \geq 2$, set
\begin{align*}
A_{n} := \setof{v \in U_{n} }{ d_{F} (v) = \sum_{ e \in t_{n-1} F^{1} v } d_{F}( t_{n-1} ) }.
\end{align*}
For each $v \in A_{n}$ and for each $w \in U_{n+1}$, let 
\begin{align*}
p(v,w) := | \setof{ ef \in F^2}{ s_F(e) = t_{n-1}, r_{F}(e) = s_{F} ( f ) = v, \ \text{and} \ r_{F} ( f ) = w } |
\end{align*}
denote the number of paths in $F$ from $t_{n-1}$ to $w$ that go through $v$.

Define a Bratteli diagram $(B,d_{B})$ as follows:
\begin{align*}
B^{0} &= F^{0} \setminus \left( \bigcup_{ n = 1}^{ \infty } A_{2n} \right)\\
B^{1} &= \left( F^{1} \setminus \left( \bigcup_{ n = 1}^{ \infty } \setof{ e \in F^{1} }{ \text{$r_{F}(e) \in A_{2n}$ or $s_{F} ( e ) \in A_{2n} $} } \right) \right) \\
	&\qquad \qquad \sqcup \left( \bigcup_{ n = 1}^{\infty } \setof{ e_{i} (v,w, n) }{ v \in A_{2n}, w \in U_{2n+1} , \text{ and }1 \leq i \leq p(v,w) } \right)
\end{align*}
with range and source maps defined by
\begin{align*}
r_{B} ( e ) = 
\begin{cases}
r_{F} (e),	&\text{if $e \in F^{1}$} \\
w,		 &\text{if $e = e_{i} ( v, w, n )$} 
\end{cases}
\quad \text{and} \quad
s_{B} ( e ) = 
\begin{cases}
s_{F} ( e ),	&\text{if $e \in F^{1}$} \\
t_{2n},	&\text{if $e = e_{i} ( v, w, n )$}
\end{cases}
\end{align*}
and the degree function defined by $d_{B} = d_{F}\vert_{B^0}$.  

Note that $B^{0} = \bigsqcup_{ n = 1}^{ \infty } ( V_{n} \sqcup \{ t_{n} \} )$ with $V_{2n-1} = U_{2n-1}$ and $V_{2n} = U_{2n} \setminus A_{2n}$.  To show that $(B, d_{B} )$ is a Bratteli diagram, we must show that 
\begin{align*}
d_{B} ( v ) \geq \sum_{ e \in B^{1} v } d_{B} ( s_{B} ( e ) ).
\end{align*}
for all $v \in B^0$.  To do this it suffices to show that for each $w \in V_{n} \sqcup \{ t_{n} \}$ we have
\begin{align*}
\sum_{ e \in B^{1} w } d_{B} ( s_{B} ( e ) ) = \sum_{ e \in F^{1} w } d_{F} ( s_{F} ( w ) ).
\end{align*}
To this end, let $n \in \N$ and first suppose $w \in V_{2n}$.  Then $w \in U_{2n} \setminus A_{2n}$.  By the construction of $B$, we have that 
$B^{1}w = F^{1} w$. Hence 
\begin{align*}
\sum_{ e \in B^{1} w } d_{B} ( s_{B} ( e ) ) = \sum_{ e \in F^{1} w} d_{F} ( s_{F} ( e ) ). 
\end{align*}
Next, suppose $w \in V_{2n-1}$.  If $w \in A_{2n-1}$ with $n \geq 2$ (the case when $n =1$ is clear), then $B^{1} w = F^{1} w$ and
\begin{align*}
\sum_{ e \in B^{1} w } d_{B} ( s_{B} ( e ) ) = \sum_{ e \in F^{1} w} d_{F} ( s_{F} ( e ) ). 
\end{align*}
Next, suppose $w \in U_{2n-1} \setminus A_{2n-1}$.  Note that $d_{F} ( t_{2n-3} ) = d_{F} ( t_{2n-2} )$.  Thus 
\begin{align*}
&\sum_{ e \in B^{1} w } d_{B} ( s_{B} ( e ) )  \\
&\quad = \sum_{ e \in V_{2n-2} B^{1} w} d_{B} ( s_{B} ( e ) ) + \sum_{ e\in t_{2n-2} B^{1} w} d_{B} ( s_{B} ( e ) ) \\
&\quad = \sum_{ e \in V_{2n-2} F^{1} w} d_{F} ( s_{F} ( e ) ) + \sum_{ e \in t_{ 2n-2 } F^{1} w } d_{F} ( s_{F} ( t_{2n-2} ) ) +  \sum_{ v \in A_{2n-2} } s( v, w ) d_{F} ( t_{2n-2} ) \\
&\quad = \sum_{ e \in V_{2n-2} F^{1} w } d_{F} ( s_{F} (e) ) +  \sum_{ e \in t_{ 2n-2 } F^{1} w } d_{F} ( s_{F} ( t_{2n-2} ) ) + \sum_{ v \in A_{2n-2 } } | t_{2n-3} F^{1} v | | v F^{1} w | d_{F} ( t_{2n-2} ) \\
&\quad =  \sum_{ e \in V_{2n-2} F^{1} w } d_{F} ( s_{F} (e) ) +  \sum_{e \in t_{ 2n-2 } F^{1} w } d_{F} ( s_{F} ( t_{2n-2} ) ) + \sum_{ v \in A_{2n-2} } \sum_{ e \in t_{2n-3} F^{1} v } | v F^{1} w | d_{F} ( t_{2n-3 } )   \\
&\quad = \sum_{ e \in V_{2n-2} F^{1} w } d_{F} ( s_{F} (e) ) +  \sum_{ e \in t_{ 2n-2 } F^{1} w } d_{F} ( s_{F} ( t_{2n-2} ) ) + \sum_{ v \in A_{2n-2} } | v F^{1} w | d_{F} (v) \\
& \quad = \sum_{ e \in V_{2n-2} F^{1} w } d_{E} ( s_{E} (e) ) +  \sum_{e \in t_{ 2n-2 } E^{1} w } d_{E} ( s_{E} ( t_{2n-2} ) )  + \sum_{ e \in A_{2n-2} F^{1} w } d_{F} ( s_{F} ( e ) ) \\
&\quad = \sum_{ e \in F^{1} w } d_{F} ( s_{F} ( e ) ).
\end{align*}
Since
\begin{align*}
\sum_{ e \in B^{1} t_{n} } d_{B} ( s_{B} (e) ) = d_{F} ( t_{n-1} ) = \sum_{ e \in F^{1}t_{n} } d_{F} ( s_{F} (e) ),
\end{align*}
it follows that for each $w \in B^{0}$ we have
\begin{align*}
\sum_{ e \in B^{1} w } d_{B} ( s_{B} ( e ) ) = \sum_{ e \in F^{1} w } d_{F} ( s_{F} ( e ) ) \leq d_{F} ( w ) = d_{B} ( w ).
\end{align*}
Thus $( B , d_{B} )$ is a Bratteli diagram.

Set $Y_{n} = V_{n} \sqcup \{ t_{n} \}$.  By the construction of $(B, d_{B} )$, for each $w \in A_{2n+1}$ and for each $v \in A_{2n-1}$ we have
\begin{align*}
| v B^{*} w | = | v F^{*} w |.
\end{align*}  
Let $(A, d_A)$ be the the Bratteli diagram obtained by telescoping $(B, d_{B} )$ at the levels $\bigsqcup_{ n = 1}^{ \infty } Y_{2n-1}$.  Since 
$Y_{2n-1} = W_{2n-1}$ for each $n \in \N$, we see that the Bratteli diagram obtained by telescoping $( F, d_{F} )$ at the levels $\bigsqcup_{ n = 1}^{ \infty } W_{2n-1}$ is also equal to $(A,d_A)$.  Thus $( F, d_{F} )$ and $(B, d_{B} )$ are equivalent Bratteli diagrams.

We now show that for each $n \geq 2$ and $w \in Y_{2n}$ we have
\begin{align*}
\sum_{ e \in Y_{2n-2 } B^{*} w } d_{B} ( s_{B} ( e ) ) = \sum_{ e \in W_{2n-2} F^{*} w } d_{F} ( s_{F} ( e ) ). 
\end{align*}
Let $w \in V_{2n}$.  First, suppose $w = t_{2n}$.  Then 
\begin{align*}
\sum_{ e \in Y_{2n-2} B^{*} w } d_{B} ( s_{B} ( e ) ) = d_{F} ( t_{2n} ) = \sum_{ e \in W_{2n-2} F^{*} w } d_{F} ( s_{F} ( e ) ).
\end{align*}

Next, suppose $w \in V_{2n}$.  Then $w \in U_{2n} \setminus A_{2n}$ and $B^{1} w = F^{1} w$.  Therefore
\begin{align*}
\sum_{ e\in W_{2n-2} F^{*} w } d_{F} ( s_{F} ( e ) ) &= \sum_{ e \in F^{1} w } \sum_{ f \in F^{1} s_{F} (e) } d_{F} ( s_{F} ( e ) ) = \sum_{ e \in B^{1} w } \sum_{ f \in F^{1} s_{B} ( e ) } d_{F} ( s_{F} ( e ) ) \\
&= \sum_{ e \in B^{1} w } \sum_{ f \in B^{1} s_{B} ( e ) } d_{B} ( s_{B} ( e ) ) = \sum_{ e \in Y_{2n-2} B^{* } w }  d_{B} ( s_{B} ( e ) ).
\end{align*}
Hence, for each $n \geq 2$ and $w \in Y_{2n}$ we have
\begin{align*}
\sum_{ e \in Y_{2n-2 } B^{*} w } d_{B} ( s_{B} ( e ) ) = \sum_{ e \in W_{2n-2} F^{*} w } d_{F} ( s_{F} ( e ) ). 
\end{align*}
In particular, for each $n \geq 2$ and $w \in V_{2n} = U_{2n} \setminus A_{2n}$ we have
\begin{align*}
\sum_{ e \in Y_{2n-2 } B^{*} w } d_{B} ( s_{B} ( e ) ) = \sum_{ e \in W_{2n-2} F^{*} w } d_{F} ( s_{F} ( e ) ) < d_{F} ( w ) = d_{B} (w).	 
\end{align*}

Let $(E, d_{E} )$ be the Bratteli diagram obtained by telescoping $(B, d_{B} )$ to $\bigsqcup_{ n = 1}^{ \infty } Y_{2n}$.  Set $H_{n} := U_{2n}$ and $y_{n} := t_{2n}$.  Then $(E, d_{E} )$ is a proper $\mathsf{M}_k$-separated Bratteli diagram equivalent to $(F, d_F)$.
\hfil \qed

\section{Nonunital AF-algebras with a unique unital quotient} \label{nonunital-sec}

We begin by describing a way to construct a graph from a proper $\mathsf{M}_k$-separated Bratteli diagram.  

\begin{defin}\label{d:bdfinal}
Let $(E, d_{E})$ be a proper $\mathsf{M}_k$-separated Bratteli diagram such that $E^{0} = \bigsqcup_{ n = 1}^{ \infty } V_{n}$ is partitioned into levels with $V_{n} = H_{n} \sqcup \{ y_{n} \}$.  Then $H := \bigsqcup_{ n = 1}^{ \infty } H_{n}$ is a saturated hereditary subset of $E^{0}$, and we construct a graph $G = (G^0, G^1, r_G, s_G)$ from $(E, d_E)$ as follows:
For each $n \in \N$ and $v \in H_{n}$, set
\begin{align*}
\delta(v) := d_{E} ( v) - \sum_{ e \in r^{-1} (v) } d_{ E} ( s_{E} (e) ) - 1 \quad \text{and} \quad m(v) := | y_{n-1} E^{*} v |.
\end{align*}
Let
\begin{align*}
G^{0} &:= E_{H}^{0} \sqcup \setof{ z_{i} }{ i = 1, \dots, k } \sqcup \setof{ x_{i}^{v} }{ v \in H , 1 \leq i \leq \delta(v) } \\
G^{1} &:= E_{H}^{1} \sqcup \setof{ e_{i} }{ i = 1, \dots, k-1} \sqcup \setof{ f_{i}^{v} }{ v \in H , 1 \leq i \leq m(v) } \\
& \hspace{2.6in} \sqcup \setof{ g_{i}^{v} }{ v \in H , 1 \leq i \leq \delta(v) }
\end{align*}
be the vertex and edge sets of $G$, respectively, with
\begin{align*}
s_{G} (e) = 
\begin{cases}
s_{E} (e),	&\text{if $e \in E_{H}^{1}$} \\
z_{i},		&\text{if $e = e_{i}$} \\
z_{k},	&\text{if $e = f_{i}^{v}$} \\
x_{i}^{v},	&\text{if $e = g_{i}^{v}$}
\end{cases}
\qquad \text{and} \qquad 
r_{G} (e) = 
\begin{cases}
r_{E} (e),	&\text{if $e \in E_{H}^{1}$} \\
z_{k}, 	&\text{if $e = e_{i}$} \\
v,		&\text{if $e = f_{i}^{v}$ or $e =g_{i}^{v}$}
\end{cases}
\end{align*}
as the range and source functions. 
\end{defin}

\begin{remar}
In Definition~\ref{d:bdfinal} the fact that $(E, d_E)$ is a proper $\mathsf{M}_k$-separated Bratteli diagram is needed to assure us that $\delta(v) \geq 0$ for all $v \in H$.
\end{remar}

\begin{remar}
In the graph $G$ of Definition~\ref{d:bdfinal} the vertex $z_k$ is an infinite emitter, and all other vertices of $G$ are regular vertices.
\end{remar}

\begin{examp}
We give an example to illustrate the construction of the graph in Definition~\ref{d:bdfinal}. Consider the following proper $\mathsf{M}_3$-separated Bratteli diagram.  In the top row the values at the first three levels are $4$, $24$, $43$, and then for level $n \geq 4$ the value is $20n+4$.

$$
 \xymatrix{
( E, d_{E} ) & 4 \ar[r] 			 & 24  \ar[r]   & 43 \ar[r]  & 64 \ar[r] & 84  & \dots & 20n+4 & \dots  \\
& 3  \ar[ur]^-{6} \ar[r]  & 3 \ar[ur]^-{6}  \ar[r]    &3 \ar[ur]^-{6} \ar[r]   & 3\ar[ru]^-{6} \ar[r]   & 3 &  \dots & 3 & \dots
}
$$
We see that $H$ consists of the vertices labeled $4, 24, 43, 64, 84, \ldots$ and that each $H_n$ consists of a single vertex, which we shall denote $v_n$.  Then $H = \{ v_1, v_2, v_3, \ldots \}$, and $\delta (v_1) = 3$, $\delta(v_2) = 1$, $\delta(v_3) = 0$, $\delta (v_4) = 2$, $\ldots$.  The graph $G = ( G^{0} , G^{1}, r_{G} , s_{G} )$ constructed from $(E, d_{E})$, as described in Definition~\ref{d:bdfinal}, is given by the following:

$$
\xymatrix{
& x_1^{v_1}  \ar[rd]	& x_2^{v_1} \ar[d]			&  x_3^{v_3} \ar[ld]  	 &    		x_1^{v_2} \ar[d]	 & 		 & 	    &  
x_1^{v_4} \ar[dr] & x_2^{v_4} \ar[d] & &  \dots \\ 
G & & v_1 \ar[rr] 	&		 		& v_2  	\ar[rr]			& 			           & v_3        \ar[rr]    &  	& v_4  \ar[rr] &  & \dots  \\
& z_1 \ar[r] & z_3 \ar[rru]^{6}	\ar[rrrru]^{6}  \ar[rrrrrru]_{6} 	& 		&  		 &    & 	 &     &     & &  \dots \\
& z_2 \ar[ru] & 	& 		&  		 &    & 	 &     &     & &  
}
$$
Note that the vertex $z_3$ is an infinite emitter.
\end{examp}

\begin{lemma}\label{l:paths}
Let $(E, d_{E})$ be a a proper $\mathsf{M}_k$-separated Bratteli diagram such that $E^{0} = \bigsqcup_{ n = 1}^{ \infty } V_{n}$ is partitioned into levels with $V_{n} = H_{n} \sqcup \{ y_{n} \}$.  If we let $H := \bigsqcup_{ n = 1}^{ \infty } H_{n}$ and let $G = ( G^{0}, G^{1}, s_{G}, r_{G} )$ the graph constructed from $(E,d_E)$ as described in Definition~\ref{d:bdfinal}, then
\begin{align*}
d_{E}(v) = | \setof{ \alpha \in G^{*} }{ r_{G} ( \alpha ) = v } | \qquad \text{ for all $v \in H$.}
\end{align*}
\end{lemma}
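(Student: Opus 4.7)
The plan is to compute $|G^{*}v|$ for $v \in H_n$ by induction on $n$, exploiting the explicit structure of the ``tail'' attached to $G$ in Definition~\ref{d:bdfinal}. First I would dispose of the auxiliary vertices. Since no edges of $G$ terminate at any $x_{i}^{v}$, and none terminate at any $z_{i}$ with $i<k$, the only path ending at such a vertex is the trivial one, so $|G^{*}x_{i}^{v}|=1$ and $|G^{*}z_{i}|=1$ for $i<k$. The vertex $z_{k}$ receives exactly the $k-1$ edges $e_{1},\dots,e_{k-1}$, each from a $z_{i}$ with $|G^{*}z_{i}|=1$, so $|G^{*}z_{k}|=1+(k-1)=k$.

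Next, for $v \in H_n$, I would enumerate the edges of $G$ terminating at $v$ as listed in Definition~\ref{d:bdfinal}: the edges of $E_{H}^{1}v$ (with sources in $H_{n-1}$), the $m(v)$ edges $f_{i}^{v}$ (each with source $z_{k}$), and the $\delta(v)$ edges $g_{i}^{v}$ (each with source $x_{i}^{v}$). Counting the trivial path $v$ together with all paths of positive length, each of which factors uniquely as a subpath followed by its terminal edge, yields the recursion
\begin{equation*}
|G^{*}v| \;=\; 1 \;+\; \sum_{e \in E_{H}^{1}v}|G^{*}s_{E}(e)| \;+\; m(v)\,k \;+\; \delta(v).
\end{equation*}

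For the inductive step I would assume $|G^{*}w|=d_{E}(w)$ for every $w \in H$ at level strictly less than $n$. The key identity comes from the $\mathsf{M}_{k}$-separated structure: because $H$ is hereditary and because edges in a Bratteli diagram connect consecutive levels, the incoming edges of $v$ partition as $E^{1}v = E_{H}^{1}v \sqcup y_{n-1}E^{1}v$, and the latter set has cardinality $m(v)$ (paths from $y_{n-1}$ to $v$ have length one). Combined with $d_{E}(y_{n-1})=k$, this gives
\begin{equation*}
\sum_{e \in E^{1}v} d_{E}(s_{E}(e)) \;=\; \sum_{e \in E_{H}^{1}v} d_{E}(s_{E}(e)) \;+\; m(v)\,k.
\end{equation*}
Substituting the inductive hypothesis and the defining equality $\delta(v) = d_{E}(v) - \sum_{e \in E^{1}v}d_{E}(s_{E}(e)) - 1$ into the recursion then collapses it to $|G^{*}v| = d_{E}(v)$. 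The base case $n=1$ is immediate: $E_{H}^{1}v=\emptyset$ and $m(v)=0$, while $\delta(v)=d_{E}(v)-1$, reducing the recursion to $|G^{*}v|=1+\delta(v)=d_{E}(v)$.

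I do not anticipate any serious obstacle; the argument is essentially bookkeeping. The main point to verify carefully is that the enumeration of incoming edges at $v$ in $G$ is exhaustive (which is immediate from the construction) and that the $+1$ for the trivial path together with the $m(v)\,k$ term exactly restore the contribution of the edges from $y_{n-1}$ that are discarded in passing from $E$ to the restriction $E_H$. The properness of the Bratteli diagram, used to ensure $\delta(v)\ge 0$, is precisely what makes this compensation work at the level of cardinalities.
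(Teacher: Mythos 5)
Your proof is correct and follows essentially the same route as the paper: induction on the level of $v$ in $H$, enumerating the incoming edges of $v$ in $G$ (the $E$-edges from $H$, the $m(v)$ edges from $z_k$, and the $\delta(v)$ edges from the sources $x_i^v$) and using $d_E(y_{n-1})=k$ together with the definition of $\delta(v)$ to collapse the recursion to $d_E(v)$. The only difference is cosmetic: you spell out the counts $|G^*z_i|=1$ and $|G^*z_k|=k$ explicitly, which the paper absorbs into the factor $k$ in front of $m(v)$.
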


\begin{proof}
We prove that for each $n \in \N$ we have $d_{E} (v) = | \setof{ \alpha \in G^{*} }{ r_{G} ( \alpha ) = v } |$ for all $v \in H_n$.  We accomplish this by induction on $n$.  

For the base case, we suppose $v \in H_{1}$.  Since $r_{G}^{-1} ( v ) = \setof{ g_{i}^{v} }{ 1 \leq i \leq \delta(v) }$ and $s_{G} ( g_{i}^{v} )$ is a source in $G$ for each $i$, we have
\begin{align*}
| \setof{ \alpha \in G^{*} }{ r_{G} ( \alpha ) = v } | &= 1 + | r_{G}^{-1} ( v ) | = 1 + \delta (v) = d_{E}(v) - \sum_{ e \in r_{E}^{-1} (v) } d_{ E } ( s(e) ) = d_{E} ( v )
\end{align*}
and the base case holds.  For the inductive step, assume that for a particular value of $n \in \N$ we have $d_{E} (v) = | \setof{ \alpha \in G^{*} }{ r_{G} ( \alpha ) = v } |$ for all $v \in H_n$.   Choose $v \in H_{n+1}$.  Then 
\begin{align*}
| \setof{ \alpha \in G^{*} }{ r_{G} ( \alpha ) = v } | &= 1 + | \setof{ g_{i}^{v} }{ 1 \leq i \leq \delta(v) } | \\
								&\qquad + \sum_{ e \in r_{E}^{-1}(v) \cap s_{E}^{-1} (H) } | \setof{ \alpha \in G^{*} }{ r_{G} ( \alpha ) = s_{G}(e) } | \\
								& \qquad \qquad + k| \setof{ f_{i}^{v} }{ 1 \leq i \leq m(v) } | \\ 
							&= 1 + \delta(v) + \sum_{ e \in r_{E}^{-1} (v) \cap s_{E}^{-1}(H) } d_{ E} ( s_{G} (e) ) + km(v) \\
							&= 1+ \delta(v) + \sum_{ e \in r_{E}^{-1}(v) } d_{ E} ( s_{E}(e) ) \\
							&= d_{E}(v)
\end{align*}
and our lemma holds for all vertices in $H_{n+1}$.  It follows from induction that the lemma holds.
\end{proof}

\begin{propo}\label{t:largestideal}
Let $(E, d_{E})$ be a proper $\mathsf{M}_k$-separated Bratteli diagram, and let $G$ be the graph constructed from $(E,d_E)$ as described in Definition~\ref{d:bdfinal}.  If $\mathfrak{A}$ is the AF-algebra associated with $(E,d_{E})$, then $\mathfrak{A} \cong C^{*} ( G )$.
\end{propo}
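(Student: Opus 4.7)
The plan is to exhibit $C^*(G)$ as the AF-algebra with Bratteli diagram $(E,d_E)$ via a compatible nested sequence of finite-dimensional $*$-subalgebras, and then to invoke Bratteli's theorem \cite{ob:af}. First, $G$ is acyclic: $E$ is acyclic, and none of the added edges $e_i\colon z_i\to z_k$, $g_i^v\colon x_i^v\to v$, $f_i^v\colon z_k\to v$ travels backwards with respect to the level stratification of $E$. Hence $C^*(G)$ is an AF-algebra by \cite[Theorem~2.4]{KPR}.

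Next, for each $n\in\N$ I would set
\[
F_n:=\bigl\{f_i^v:v\in H_1\cup\cdots\cup H_n,\ 1\le i\le m(v)\bigr\},\qquad
Q_n:=p_{z_k}-\sum_{e\in F_n}s_e s_e^*,
\]
a subprojection of $p_{z_k}$ capturing the portion of the infinite emitter not yet spent on edges into levels $\le n$. Let $\mathfrak{B}_n\subset C^*(G)$ be the $*$-subalgebra generated by $Q_n$ together with the $p_v$ and $s_e$ associated to all vertices in $\{z_1,\dots,z_k\}\cup\bigcup_{m\le n}(H_m\cup\{x_i^v:v\in H_m\})$ and to all edges with both endpoints in that finite set. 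One checks that $\mathfrak{B}_n$ is naturally isomorphic to the graph $C^*$-algebra of the finite acyclic graph $G_n$ obtained from the finite portion of $G$ by attaching a single new edge from $z_k$ to an extra sink $\star_n$ (under the identification $p_{\star_n}\leftrightarrow Q_n$ and $s_{z_k\to\star_n}\leftrightarrow Q_n$). Counting paths in $G_n$ ending at each sink, and using Lemma~\ref{l:paths} to see that $d_E(v)$ paths end at $v\in H_n$, one obtains
\[
\mathfrak{B}_n\;\cong\;\bigoplus_{v\in H_n}\mathsf{M}_{d_E(v)}\;\oplus\;\mathsf{M}_k,
\]
the $\mathsf{M}_k$ summand spanned by $\{s_{e_i}Q_ns_{e_j}^*:0\le i,j\le k-1\}$ under the convention $s_{e_0}:=p_{z_k}$.

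I would then verify that the inclusion $\mathfrak{B}_n\hookrightarrow\mathfrak{B}_{n+1}$ realizes exactly the multiplicities prescribed by $(E,d_E)$. For $v\in H_n$ (regular in $G$), the Cuntz--Krieger relation decomposes each $v$-matrix unit into a sum indexed by the edges of $E_H$ leaving $v$, giving multiplicity $|vE^1w|$ into each $w$-summand for $w\in H_{n+1}$ and zero into the $\mathsf{M}_k$ summand, matching property~(2) of Definition~\ref{Mk-separated-def}. For the $\mathsf{M}_k$ summand the crucial identity
\[
Q_n=Q_{n+1}+\sum_{w\in H_{n+1}}\sum_{i=1}^{m(w)}s_{f_i^w}s_{f_i^w}^*,
\]
after conjugation by the $s_{e_j}$'s, yields multiplicity $1$ into the $\mathsf{M}_k$ summand at level $n{+}1$ (matching property~(4), $|y_nE^1y_{n+1}|=1$) and multiplicity $m(w)=|y_nE^1w|$ into each $w$-summand. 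The residual ``new'' rank $d_E(w)-\sum_{v\in H_n}d_E(v)|vE^1w|-km(w)=1+\delta(w)$ at each $w\in H_{n+1}$ is supplied by the new generators $p_w$ and $\{p_{x_i^w}\}_{i=1}^{\delta(w)}$; its nonnegativity is precisely property~(6). Density $\overline{\bigcup_n\mathfrak{B}_n}=C^*(G)$ is immediate since every canonical generator of $C^*(G)$ lies in some $\mathfrak{B}_n$, so Bratteli's theorem gives $C^*(G)\cong\mathfrak{A}$.

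The main obstacle is the infinite emitter $z_k$: the naive choice of $\mathfrak{B}_n$ as the span of matrix units $s_\alpha s_\beta^*$ with $r(\alpha)=r(\beta)\in H_n\cup\{z_k\}$ gives the wrong Bratteli diagram, because $p_{z_k}$ cannot be refined via a Cuntz--Krieger relation and the $\mathsf{M}_k$ summand would then artificially decouple from the $w$-summands at later levels. The residual projections $Q_n$ are precisely the device that makes the $\mathsf{M}_k$ summand feed correctly into the later $w$-summands via the edges $f_i^w$, producing the nontrivial extension structure recorded by $(E,d_E)$.
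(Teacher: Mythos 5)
Your overall strategy coincides with the paper's: the residual projections $Q_n$ under the infinite emitter $z_k$, the nested finite-dimensional subalgebras (your $\mathfrak{B}_n$ are in fact the same subalgebras as the $\mathfrak{A}_n$ used in the paper, just with a different generating set), the multiplicity computation based on the identity $Q_n = Q_{n+1}+\sum_{w\in H_{n+1}}\sum_{i=1}^{m(w)} s_{f_i^w}s_{f_i^w}^*$, and the density-plus-direct-limit conclusion. However, the step in which you justify the key structural claim about $\mathfrak{B}_n$ is wrong as stated. The finite graph $G_n$ you build by attaching a single edge from $z_k$ to a new sink $\star_n$ does not satisfy $C^*(G_n)\cong\mathfrak{B}_n$: counting paths into the sink $\star_n$ (the trivial path at $\star_n$, the new edge itself, and the $k-1$ paths $e_i$ followed by the new edge) gives $k+1$, so $C^*(G_n)\cong\bigl(\bigoplus_{v\in H_n}\mathsf{M}_{d_E(v)}\bigr)\oplus\mathsf{M}_{k+1}$, which has the wrong corner. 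Correspondingly, the proposed identification $p_{\star_n}\mapsto Q_n$, $s_{z_k\to\star_n}\mapsto Q_n$ is not a Cuntz--Krieger $G_n$-family at all: the vertex projections $p_{\star_n}$ and $p_{z_k}$ are required to be mutually orthogonal, whereas $Q_n\le p_{z_k}$, so this assignment defines no homomorphism out of $C^*(G_n)$; it silently conflates the new vertex with the new edge, which is exactly where the discrepancy between $k$ and $k+1$ is hidden.

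The conclusion you extract from that step is nevertheless true, and the repair is local. Either verify the structure of $\mathfrak{B}_n$ directly, as the paper does: the elements $s_{e_i}Q_n s_{e_j}^*$ with $0\le i,j\le k-1$ and $s_{e_0}:=p_{z_k}$ are $k\times k$ matrix units (since $Q_n s_{e_i}^*s_{e_j}Q_n=\delta_{ij}Q_n$), and they are orthogonal to the summands $\operatorname{span}\{s_\alpha s_\beta^*: r_G(\alpha)=r_G(\beta)=v\}\cong\mathsf{M}_{d_E(v)}$ for $v\in H_n$ because $Q_n s_{f_j^w}=0$ for all $w$ in levels at most $n$; together with Lemma~\ref{l:paths} this yields $\mathfrak{B}_n\cong\bigl(\bigoplus_{v\in H_n}\mathsf{M}_{d_E(v)}\bigr)\oplus\mathsf{M}_k$. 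Or, if you want a finite-graph model, use the correct one for "not imposing the Cuntz--Krieger relation at $z_k$": add a new sink $z_k'$ together with a parallel copy $e_i'$ of each edge $e_i$ \emph{into} $z_k$ (not an edge out of $z_k$); then the sink $z_k'$ receives exactly $k$ paths and $p_{z_k'}$ corresponds to the gap projection $Q_n$. With this correction, your multiplicity bookkeeping (multiplicity $|vE^1w|$ from the $v$-summands, $m(w)=|y_nE^1w|$ from the $\mathsf{M}_k$ corner, multiplicity $1$ into the new $\mathsf{M}_k$ corner, and residual rank $1+\delta(w)$ carried by $p_w$ and the $p_{x_i^w}$) and the density argument go through and reproduce the paper's proof.
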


\begin{proof}
Let $\setof{ S_{e}, P_{v} }{ e \in G^{1}, v \in G^{0} }$ be a universal Cuntz-Krieger $G$-family in $C^{*} (G)$.  Using the notation of Definition~\ref{d:bdfinal} set $V_{n} := H_{n} \sqcup \{ y_{n} \}$ and define
$$
Q_{n} := P_{z_{k}} - \sum_{ 1 \leq i \leq n } \sum_{ v \in H_{i} } \sum_{ 1 \leq j \leq m(v) } S_{ f_{j}^{v} } S_{ f_{j}^{v} }^{*} $$
and
$$
\mathfrak{A}_{n} := C^{*} ( \setof{ S_{ \alpha } }{ \alpha \in E^{*}, r_{G} ( \alpha ) \in H_{n} }  \cup \setof{ S_{e_{i}} Q_{n} }{ i = 1, \dots, k-1 } ).
$$
We will prove the following:
\begin{itemize}
\item[(1)] $\mathfrak{A}_{n} \subseteq \mathfrak{A}_{n+1}$ and there exists an isomorphism 
\begin{align*}
\ftn{ \phi_{n} }{ \mathfrak{A}_{n} }{ \left( \bigoplus_{ v \in H_{n} } \mathsf{M}_{ d_{E}(v) } \right) \oplus \mathsf{M}_{k} }
\end{align*} 
such that the induced homomorphism 
\begin{align*}
\ftn{ \psi_{n,n+1} }{ \left( \bigoplus_{ v \in H_{n} } \mathsf{M}_{ d_{E}(v) } \right) \oplus \mathsf{M}_{k} } { \left( \bigoplus_{ v \in H_{n+1} } \mathsf{M}_{ d_{E}(v) } \right) \oplus \mathsf{M}_{k} }
\end{align*}
that makes the diagram 
\begin{align*}
\xymatrix{
\mathfrak{A}_{n} \ar@{^{(}->}[rr] \ar[d]^{ \phi_{n} } & &\mathfrak{A}_{n+1} \ar[d]^{ \phi_{n+1} } \\
\big( \bigoplus_{ v \in H_{n} } \mathsf{M}_{ d_E(v) } \big) \oplus \mathsf{M}_{k} \ar[rr]_{ \psi_{n,n+1} } & & \big( \bigoplus_{ v \in H_{n+1} } \mathsf{M}_{ d_{E}(v) } \big) \oplus \mathsf{M}_{k}
}
\end{align*}
commutative has multiplicity matrix $( | w E^{1} v | )_{ w \in V_{n}, v \in V_{n+1} }$. 

\item[(2)] $\setof{ S_{e}, P_{v} }{ e \in G^{1}, v \in G^{0} } \subseteq \bigcup_{ n = 1}^{ \infty } \mathfrak{A}_{n}$. 
\end{itemize}

Note that (1) implies that $\mathfrak{A} \cong \overline{ \bigcup_{ n = 1}^{ \infty } \mathfrak{A}_{n} }$ and (2) implies that $\overline{ \bigcup_{ n = 1}^{ \infty } \mathfrak{A}_{n} } = C^{*} (G)$, from which it follows that $\mathfrak{A} \cong C^{*} ( G )$.  Thus establishing (1) and (2) will prove the theorem.

We first prove (1).  Let $\alpha \in G^{*}$ such that $r_{G} ( \alpha ) = v \in H_{n}$.  Note that 
\begin{align*}
S_{\alpha} &= S_{\alpha} P_{ v } = S_{ \alpha } \sum_{ e \in s_{G}^{-1} (v) } S_{ e } S_{e}^{*} = \sum_{ e \in s_{G}^{-1} (v) } S_{ \alpha } S_{e} S_{e}^{*}.
\end{align*}
Since $v \in H_{n}$, it follows that $r_{G} ( s_{G}^{-1} (v) ) \in H_{n+1}$.  Thus, $S_{ \alpha } S_{e} S_{e}^{*} \in \mathfrak{A}_{n+1}$ for all $e \in s_{G}^{-1} (v)$.  Note that 
\begin{align*}
S_{e_{i}} Q_{n} &= S_{ e_{i} } \left( Q_{n+1} - \sum_{v \in H_{n+1} } \sum_{ 1 \leq j \leq m(v) }S_{ f_{j}^{v} } S_{f_{j}^{v}}^{*} \right) = S_{e_{i}} Q_{n+1} - \sum_{ v \in H_{n+1} } \sum_{ 1 \leq j \leq m(v) } S_{e_{i}} S_{ f_{j}^{v} } S_{f_{j}^{v}}^{*}.
\end{align*}
Since $S_{e_{i}} Q_{n+1} \in \mathfrak{A}_{n+1}$ and $S_{e_{i}} S_{ f_{j}^{v} } S_{f_{j}^{v}}^{*} \in \mathfrak{A}_{n+1}$, we have that $S_{e_{i} } Q_{n} \in \mathfrak{A}_{n+1}$.  Thus $\mathfrak{A}_{n} \subseteq \mathfrak{A}_{n+1}$.

For each $v \in H_{n}$, set 
\begin{align*}
\mathfrak{B}_{n,v} := C^{*} ( \setof{ S_{\alpha} }{ r_{G}(\alpha) = v } ) \qquad \text{and} \qquad \mathfrak{C}_{n} := C^{*} ( \setof{ S_{e_{i}} Q_{n} }{ 1 \leq i \leq k-1 } ).
\end{align*} 
Define $\ftn{ \alpha_{n} }{ \left( \bigoplus_{ v \in H_{n} } \mathfrak{B}_{n,v} \right) \oplus \mathfrak{C}_{n} }{ \mathfrak{A}_{n} }$ by 
\begin{align*}
\alpha_{n} \left( ( (x_{v})_{ v \in H_{n} } , y) \right) := \sum_{ v \in H_{n}} x_{v} + y.
\end{align*}
One can verify that $\alpha_{n}$ is an isomorphism.  In addition, for each $v \in H_{n}$, we have $d_{E}(v) = | \setof{ \alpha \in G^{*} }{ r_{G} ( \alpha ) = v } |$, and hence $\mathfrak{B}_{n,v} \cong \mathsf{M}_{d_{E}(v)}$.  Also, $\mathfrak{C}_{n} \cong \mathsf{M}_{k}$.  Let $\ftn{ \phi_{n} }{ \mathfrak{A}_{n} }{ \left( \bigoplus_{ v \in H_{n} } \mathsf{M}_{ d_{E}(v) } \right) \oplus \mathsf{M}_{k} }$ be the composition
\begin{align*}
\xymatrix{
\mathfrak{A}_{n} \ar[r]^-{ \alpha_{n}^{-1} }  & \left( \bigoplus_{ v \in H_{n} } \mathfrak{B}_{n,v} \right) \oplus \mathfrak{C}_{n} \cong \left( \bigoplus_{ v \in H_{n} } \mathsf{M}_{ d_{E}(v) } \right) \oplus \mathsf{M}_{k}.
}
\end{align*}
Let $ \iota_{n,n+1} :  \mathfrak{A}_{n} \hookrightarrow \mathfrak{A}_{n+1}$ be the inclusion map.  Define $\ftn{ \psi_{n,n+1} }{ \left( \bigoplus_{ v \in H_{n} } \mathsf{M}_{ d_{E}(v) } \right) \oplus \mathsf{M}_{k} }{ \left( \bigoplus_{ v \in H_{n+1} } \mathsf{M}_{ d_{E}(v) } \right) \oplus \mathsf{M}_{k} }$ to be the composition $\phi_{n+1} \circ \iota_{n,n+1} \circ \phi_{n}^{-1}$.

If $w \in H_{n}$, then 
\begin{align*}
P_{w} &= \sum_{ e \in s_{G}^{-1} ( w ) } S_{e} S_{e}^{*} = \sum_{ v \in H_{n+1} } \sum_{ e \in w E^{*} v} S_{e} S_{e}^{*} \in \sum_{ v \in H_{n+1} } \mathfrak{B}_{n+1,v}.
\end{align*}
Note that $$Q_{n} = Q_{n+1} - \sum_{ v \in H_{n+1} } \sum_{ 1 \leq j \leq m(v) } S_{ f_{j}^{v} } S_{f_{j}^{v}}^{*} \in \mathfrak{C}_{n+1} + \sum_{ v \in H_{n+1} } \mathfrak{B}_{n+1,v}.$$  Therefore, the multiplicity matrix $\phi_{n,n+1}$ is given by $( | w E^{*} v | )_{ w \in V_{n}, V_{n+1}}$.  This establishes (1), which implies $\mathfrak{A} \cong \overline{ \bigcup_{ n = 1}^{ \infty } \mathfrak{A}_{n} }$.

We now prove (2).  Note that for each $v \in H_{n}$, we have $S_{ g_{i}^{v} }, S_{e} \in \mathfrak{A}_{n}$ for all $e \in r_{G}^{-1} (v)$ and for all $1 \leq i \leq \delta(v)$.  Since $S_{ g_{i}^{v} } S_{g_{i}^{v}}^{*} = P_{ x_{i}^{v} }$ and $S_{ e }^{*} S_{e} = P_{ v }$ for each $e \in r_{G}^{-1} (v)$, we have that $S_{ g_{i}^{v} } , S_{e}, P_{v}, P_{ x_{i}^{v} } \in \bigcup_{ n = 1}^{ \infty } \mathfrak{A}_{n}$ for all $n \in \N$, $v \in H_{n}$, and $e \in r_{G}^{-1} (v)$.  

All that remains is to show that $S_{e_i} \in \bigcup_{ n = 1}^{ \infty } \mathfrak{A}_{n}$ for $i =1, \ldots, k-1$ and $P_{z_k} \in \bigcup_{ n = 1}^{ \infty } \mathfrak{A}_{n}$ for $k=1, \ldots k$ are in $\bigcup_{ n = 1}^{ \infty } \mathfrak{A}_{n}$.  We shall actually show that all these elements are in $\mathfrak{A}_{1}$.  To do this, we see that for each $v \in H_1$ we have $m(v) = 0$ so that $Q_1 = P_{z_k}$.  Thus $P_{z_k} \in \mathfrak{A}_{1}$.  In addition, for all $1 \leq i \leq k-1$ we have $S_{e_i} = S_{e_i}P_{z_k} = S_{e_i} Q_1 \in \mathfrak{A}_{1}$.  Moreover, it follows that for all $1 \leq i \leq k-1$ we have $P_{z_i} = S_{e_i} S_{e_i}^* \in \mathfrak{A}_{1}$.  

The previous two paragraphs show that $\setof{ S_{e} , g_{v} }{ e \in G^{1}, v \in G^{0} } \subseteq \bigcup_{ n = 1}^{ \infty } \mathfrak{A}_{n}$, which establishes (2).  Since each $\mathfrak{A}_{n}$ is a $C^*$-subalgebra of $C^*(G)$, and the elements of the set $\setof{ S_{e} , P_{v} }{ e \in G^{1}, v \in G^{0} }$ generate $C^*(G)$, it follows that $C^*(G) \cong \overline{ \bigcup_{ n = 1}^{ \infty } \mathfrak{A}_{n} }$.
\end{proof}

The following theorem shows that the conjecture from the introduction holds whenever $\mathfrak{A}$ is an AF-algebra with a largest ideal $\mathfrak{I}$ such that $\mathfrak{A} / \mathfrak{I}$ is the only unital quotient of $\mathfrak{A}$.

\begin{theor} \label{one-big-ideal-thm}
Let $\mathfrak{A}$ be a nonunital AF-algebra with a largest ideal $\mathfrak{I}$. If $\mathfrak{A}/\mathfrak{I}$ is the only unital quotient of $\mathfrak{A}$, then the following are equivalent:
\begin{itemize}
\item[(1)] $\mathfrak{A}$ is isomorphic to a graph $C^*$-algebra.
\item[(2)] $\mathfrak{A} / \mathfrak{I}$ is a Type~I $C^*$-algebra with finitely many ideals.
\item[(3)] $\mathfrak{A} / \mathfrak{I} \cong \mathsf{M}_k$ for some $k \in \N$.
\end{itemize}
\end{theor}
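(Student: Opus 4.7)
The plan is to establish the chain of implications (3)$\Rightarrow$(1)$\Rightarrow$(2)$\Rightarrow$(3), since all three implications follow relatively directly from results already assembled in the paper once set up correctly.

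For (3)$\Rightarrow$(1), which is where the substantive work lies, I would invoke Lemma~\ref{l:bdfinal} to produce a proper $\mathsf{M}_k$-separated Bratteli diagram $(E,d_E)$ for $\mathfrak{A}$; the hypothesis that $\mathfrak{A}/\mathfrak{I} \cong \mathsf{M}_k$ and is the only unital quotient of $\mathfrak{A}$ is exactly what is needed to apply that lemma. Then Proposition~\ref{t:largestideal} takes the resulting Bratteli diagram as input and produces a graph $G$ (constructed via Definition~\ref{d:bdfinal}) together with an isomorphism $\mathfrak{A} \cong C^*(G)$. This gives (1).

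For (1)$\Rightarrow$(2), suppose $\mathfrak{A} \cong C^*(G)$ for some graph $G$. Because $\mathfrak{A}$ is AF, so is $C^*(G)$, so $G$ is acyclic. The quotient $\mathfrak{A}/\mathfrak{I}$ is a unital quotient of the AF graph $C^*$-algebra $\mathfrak{A}$, so \cite[Proposition~4.21]{kst:realization} directly yields that $\mathfrak{A}/\mathfrak{I}$ is a Type~I $C^*$-algebra with finitely many ideals.

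For (2)$\Rightarrow$(3), I would first observe that because $\mathfrak{I}$ is a \emph{largest} ideal (not merely a maximal one), every ideal of $\mathfrak{A}$ is either contained in $\mathfrak{I}$ or equal to $\mathfrak{A}$; by the lattice correspondence, this forces $\mathfrak{A}/\mathfrak{I}$ to be simple. Since $\mathfrak{A}$ is AF, $\mathfrak{A}/\mathfrak{I}$ is a unital simple AF-algebra. Combined with the Type~I hypothesis, $\mathfrak{A}/\mathfrak{I}$ is a unital simple Type~I $C^*$-algebra; by the standard structure theorem for simple Type~I $C^*$-algebras this forces $\mathfrak{A}/\mathfrak{I} \cong \K(H)$ for some Hilbert space $H$, and unitality then forces $\dim H < \infty$, so $\mathfrak{A}/\mathfrak{I} \cong \mathsf{M}_k$ for some $k \in \N$. (Alternatively, one could apply Corollary~\ref{AF-conj-for-unital-cor} to see that $\mathfrak{A}/\mathfrak{I}$ is a unital simple graph $C^*$-algebra that is AF, and hence finite-dimensional, and then simplicity forces it to be a single matrix block.)

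The main obstacle in this theorem is really hidden in the (3)$\Rightarrow$(1) direction, which relies on the substantial Bratteli-diagram machinery of Section~\ref{Brat-sec} (particularly Lemma~\ref{l:bdfinal}) together with the graph construction of Definition~\ref{d:bdfinal} and its verification in Proposition~\ref{t:largestideal}; once those tools are in hand, the theorem itself is a short assembly.
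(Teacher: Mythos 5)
Your proposal is correct and follows essentially the same route as the paper: (1)$\Rightarrow$(2) via \cite[Proposition~4.21]{kst:realization}, (2)$\Rightarrow$(3) by simplicity of $\mathfrak{A}/\mathfrak{I}$ forced by the largest-ideal hypothesis plus the structure of unital simple Type~I $C^*$-algebras, and (3)$\Rightarrow$(1) via Lemma~\ref{l:bdfinal}, Definition~\ref{d:bdfinal}, and Proposition~\ref{t:largestideal}. The only cosmetic difference is the order in which the cycle of implications is presented.
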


\begin{proof}
If (1) holds, then it follows from \cite[Proposition~4.21]{kst:realization} that the unital quotient $\mathfrak{A} / \mathfrak{I}$ is a Type~I $C^*$-algebra with finitely many ideals.  Hence (1) implies (2).

If (2) holds, then since $\mathfrak{I}$ is a largest ideal of $\mathfrak{A}$, the quotient $\mathfrak{A} / \mathfrak{I}$ is simple.  In addition, since $\mathfrak{A}$ is AF, and hence Type~I, it follows that $\mathfrak{A} / \mathfrak{I}$ is Type~I.  Since any unital, simple, Type~I $C^*$-algebra is isomorphic to $\mathsf{M}_k$ for some $k \in \N$, (3) holds.  Thus (2) implies (3)

If (3) holds, then Lemma~\ref{l:bdfinal} implies there exists a proper $\mathsf{M}_k$-separated Bratteli diagram for $\mathfrak{A}$.  If $G$ is the graph constructed from $(E,d_E)$ as described in Definition~\ref{d:bdfinal}, then  Proposition~\ref{t:largestideal} implies that $A \cong C^*(G)$.  Hence (3) implies (1).
\end{proof}

\begin{remar}
Recall that \cite[Theorem~4.7]{kst:realization} implies that an AF-algebra is isomorphic to the $C^*$-algebra of a row-finite graph with no sinks if and only if the AF-algebra has no unital quotients.  If the conditions of Theorem~\ref{one-big-ideal-thm} hold, then since $\mathfrak{A} / \mathfrak{I}$ is a unital quotient of $\mathfrak{A}$, we know that $\mathfrak{A}$ is not isomorphic to the $C^*$-algebra of a row-finite graph with no sinks.  The construction of Definition~\ref{d:bdfinal} shows that $\mathfrak{A}$ is, however, isomorphic to the $C^*$-algebra of a graph with no sinks and exactly one infinite emitter.
\end{remar}

It is easy to see that the conjecture from the introduction holds for simple AF-algebras: If a simple AF-algebra is unital, Corollary~\ref{AF-conj-for-unital-cor} shows it is isomorphic to a graph $C^*$-algebra if and only if it is a Type~I $C^*$-algebra.  If a simple AF-algebra is nonunital, \cite[Theorem~4.7]{kst:realization} shows it is always isomorphic to a graph $C^*$-algebra.

Combining our result for unital AF-algebras in Corollary~\ref{AF-conj-for-unital-cor} and for nonunital AF-algebras in Theorem~\ref{one-big-ideal-thm} allows us to show that the conjecture from the introduction also holds for the class of AF-algebras with exactly one ideal.

\begin{theor} \label{one-ideal-thm}
Let $\mathfrak{A}$ be an AF-algebra with exactly one proper nonzero ideal.  Then $\mathfrak{A}$ is isomorphic to a graph $C^*$-algebra if and only if every unital quotient of $\mathfrak{A}$ is a Type~I $C^*$-algebra.
\end{theor}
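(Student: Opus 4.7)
The plan is to reduce the statement to a case analysis driven by whether $\mathfrak{A}$ and $\mathfrak{A}/\mathfrak{I}$ are unital, and in each case invoke an already-established result: Corollary~\ref{AF-conj-for-unital-cor}, Theorem~\ref{one-big-ideal-thm}, or \cite[Theorem~4.7]{kst:realization}. First I would make the preliminary observation that since $\mathfrak{A}$ has exactly one proper nonzero ideal $\mathfrak{I}$, the entire ideal lattice of $\mathfrak{A}$ is $\{0,\mathfrak{I},\mathfrak{A}\}$; in particular $\mathfrak{A}$ (and every subquotient) automatically has finitely many ideals, $\mathfrak{I}$ is a largest ideal in the sense of Section~\ref{Brat-sec}, and $\mathfrak{A}/\mathfrak{I}$ is simple. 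The possible nonzero quotients of $\mathfrak{A}$ are exactly $\mathfrak{A}$ itself and $\mathfrak{A}/\mathfrak{I}$.

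Next I would split into three cases. In the first case, $\mathfrak{A}$ is unital. Since $\mathfrak{A}$ is itself a unital quotient, the hypothesis that every unital quotient is Type~I is equivalent to saying $\mathfrak{A}$ is Type~I (Type~I passes to quotients, so this also forces $\mathfrak{A}/\mathfrak{I}$ to be Type~I). Combined with the automatic finiteness of the ideal lattice, Corollary~\ref{AF-conj-for-unital-cor} gives the biconditional directly. In the second case, $\mathfrak{A}$ is nonunital but $\mathfrak{A}/\mathfrak{I}$ is unital. Then $\mathfrak{A}/\mathfrak{I}$ is the unique nonzero unital quotient of $\mathfrak{A}$, so the hypothesis reduces to asking that $\mathfrak{A}/\mathfrak{I}$ be Type~I, and Theorem~\ref{one-big-ideal-thm} immediately supplies the equivalence with $\mathfrak{A}$ being a graph $C^{*}$-algebra. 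In the third case, both $\mathfrak{A}$ and $\mathfrak{A}/\mathfrak{I}$ are nonunital; then $\mathfrak{A}$ has no nonzero unital quotients, so the right-hand condition holds vacuously, and \cite[Theorem~4.7]{kst:realization} ensures that $\mathfrak{A}$ is isomorphic to the $C^{*}$-algebra of a row-finite graph with no sinks, in particular a graph $C^{*}$-algebra. Conversely, whenever $\mathfrak{A}$ is a graph $C^{*}$-algebra, \cite[Proposition~4.21]{kst:realization} guarantees that any unital quotient is Type~I, closing all three cases.

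The proof is essentially bookkeeping: there is no substantive obstacle, since all the hard work has been done in Corollary~\ref{AF-conj-for-unital-cor}, Theorem~\ref{one-big-ideal-thm}, and \cite[Theorem~4.7]{kst:realization}. The only point that requires a brief justification is the observation, in Case~1, that the hypothesis ``every unital quotient of $\mathfrak{A}$ is Type~I'' collapses to ``$\mathfrak{A}$ is Type~I'' (since $\mathfrak{A}=\mathfrak{A}/0$ appears among the unital quotients), so that Corollary~\ref{AF-conj-for-unital-cor} applies without needing to separately verify the Type~I condition on $\mathfrak{A}/\mathfrak{I}$. The key conceptual content is simply that the three established theorems have been arranged precisely to cover the three possible configurations of unitality arising from the ideal chain $0\subsetneq\mathfrak{I}\subsetneq\mathfrak{A}$.
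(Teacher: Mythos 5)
Your proposal is correct and follows essentially the same argument as the paper: necessity via \cite[Proposition~4.21]{kst:realization}, and sufficiency by the same three-case split on the unitality of $\mathfrak{A}$ and $\mathfrak{A}/\mathfrak{I}$, invoking Corollary~\ref{AF-conj-for-unital-cor}, Theorem~\ref{one-big-ideal-thm}, and \cite[Theorem~4.7]{kst:realization} respectively. The only difference is that you spell out the routine observations (finiteness of the ideal lattice, $\mathfrak{I}$ being a largest ideal, the collapse of the hypothesis in the unital case) a bit more explicitly than the paper does.
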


\begin{proof}
Necessity follows from \cite[Proposition~4.21]{kst:realization}.  To see sufficiency, let $\mathfrak{I}$ be the unique proper nonzero ideal of $\mathfrak{A}$, and consider three cases.

\noindent \textsc{Case~I:} $\mathfrak{A}$ is unital.  Then by hypothesis $\mathfrak{A}$ is Type~I with finitely many ideals, so by Corollary~\ref{AF-conj-for-unital-cor} we have $\mathfrak{A}$ is isomorphic to a graph $C^*$-algebra.

\noindent \textsc{Case~II:} $\mathfrak{A}$ is nonunital, and $\mathfrak{A} / \mathfrak{I}$ is unital.  Then $\mathfrak{A} / \mathfrak{I}$ is the only unital quotient of $\mathfrak{A}$, and this quotient is simple and Type~I by hypothesis.  Hence Theorem~\ref{one-big-ideal-thm} implies $\mathfrak{A}$ is isomorphic to a graph $C^*$-algebra.

\noindent \textsc{Case~III:} $\mathfrak{A}$ is nonunital and $\mathfrak{A} / \mathfrak{I}$ is nonunital.   Then $\mathfrak{A}$ has no unital quotients, and \cite[Theorem~4.7]{kst:realization} implies $\mathfrak{A}$ is isomorphic to a graph $C^*$-algebra.
\end{proof}

It was proven in \cite[Theorem~4.7]{kst:realization} that an AF-algebra with no unital quotients is isomorphic to a graph $C^*$-algebra.  However, the argument uses ultragraphs, and it is difficult to determine the required graph from the proof.  Here we provide an alternate proof that shows explicitly how to construct a graph from a Bratteli diagram whose $C^*$-algebra is isomorphic to the AF-algebra.

\begin{theor}[cf.~Theorem~4.7 of \cite{kst:realization}]\label{t:nounitalquot}
Let $A$ be an AF-algebra that has no nonunital quotients.  Then $A$ has a Bratteli diagram $(E, d_{E} )$ such that $d_E(v) \geq 2$ and $d_{E} ( v ) > \sum_{ e \in r_{E}^{-1} ( v ) } d_{E} ( s_{E} ( e ) )$ for all $v \in E^{0}$.  For any such Bratteli diagram, construct a graph $G$ from $(E,d_E)$ as follows:  Set $\delta(v) = d_{E} ( v ) -   \sum_{ e \in r_{E}^{-1} ( v ) } d_{E} ( s_{E} ( e ) ) - 1$ for each $v \in E^0$. Define the vertex set and edge set of $G$ as
$$
G^{0} = E^{0} \sqcup \setof{ x_{i}^{v} }{ v \in E^{0} , 1 \leq i \leq \delta(v) }$$
and
$$G^{1} = E^{1} \sqcup \setof{ e_{i}^{v} }{ v \in E^{0} , 1 \leq i \leq \delta(v) },$$
respectively.  Also define the range and source maps of $G$ as
$$
r_{G} \vert_{ E^{1} } = r_{E^{1}} \quad \text{and} \quad r_{G} ( e_{i}^{v} ) = v $$
and
$$s_{G} \vert_{ E^{1} } = r_{ E^{1 } } \quad \text{and} \quad s_{G} ( e_{i}^{v} ) = x_{i}^{v},$$
respectively.  Then $\mathfrak{A} \cong C^{*} ( G )$.
\end{theor}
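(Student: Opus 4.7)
The plan is to prove the theorem in two stages: first, produce a Bratteli diagram for $\mathfrak{A}$ with the stated properties; then, given such a diagram, construct the isomorphism $\mathfrak{A} \cong C^{*}(G)$ by matching finite-dimensional subalgebras. The second stage will follow the template of Proposition~\ref{t:largestideal} but is simpler here, since there is no ideal quotient to track separately.

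For the first stage, start with any Bratteli diagram $(F,d_F)$ for $\mathfrak{A}$. The hypothesis that $\mathfrak{A}$ has no unital quotients means that for every proper saturated hereditary subset $H \subsetneq F^{0}$, the Bratteli diagram on $F^{0} \setminus H$ represents a nonunital AF-algebra, so the contrapositive of the second part of Lemma~\ref{l:sumproj}, applied to each such quotient diagram, supplies vertices where the parent-sum fails to equal $d_F$ at every downstream scale. Combining this with the upstream-propagation given by the first part of Lemma~\ref{l:sumproj}, together with a diagonal argument over saturated hereditary sets parallel in spirit to the one used in Lemma~\ref{l:bdnonunit}, one chooses an increasing sequence of levels $1 = n_1 < n_2 < \cdots$ such that in the telescoped diagram $(E,d_E)$ every vertex $v$ at level $k \geq 2$ satisfies $d_{E}(v) > \sum_{e \in r_{E}^{-1}(v)} d_{E}(s_{E}(e))$. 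A further thinning arranges that every source of $E$ also has $d_{E}(v) \geq 2$; for non-sources, the bound $d_{E}(v) \geq 2$ is automatic from the strict inequality and the positivity of parent contributions. I expect this telescoping step to be the technical heart of the argument.

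For the second stage, build $G$ from $(E,d_E)$ as in the statement, and note that $\delta(v) \geq 0$ for all $v$ (immediate from the strict inequality), that $G$ is acyclic with no infinite emitters, and that every vertex of $E$ is regular in $G$. For each $n$ and each $v \in W_n$ (the $n$\textsuperscript{th} level of $E$), set
\[
B_{n,v} := C^{*}\bigl(\{\, S_{\alpha} : \alpha \in G^{*},\ r_{G}(\alpha) = v\,\}\bigr) \qquad \text{and} \qquad \mathfrak{A}_n := \bigoplus_{v \in W_n} B_{n,v}.
\]
By induction on $n$, mirroring Lemma~\ref{l:paths}, one shows $|\{\alpha \in G^{*} : r_{G}(\alpha) = v\}| = 1 + \delta(v) + \sum_{e \in r_{E}^{-1}(v)} d_{E}(s_{E}(e)) = d_{E}(v)$, so that $B_{n,v} \cong \mathsf{M}_{d_{E}(v)}$; the summands are mutually orthogonal because paths with different ranges give matrix units in orthogonal corners.

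Finally, applying the Cuntz-Krieger relation $P_v = \sum_{e \in s_{G}^{-1}(v)} S_e S_e^{*}$ at each $v \in W_n$ and observing that the edges of $G$ emitted by $v$ are exactly the edges of $E$ emitted by $v$ gives $\mathfrak{A}_n \subseteq \mathfrak{A}_{n+1}$ with multiplicity matrix $(|vE^{1}w|)_{v \in W_n,\, w \in W_{n+1}}$, which is precisely the Bratteli-diagram multiplicity matrix of $(E,d_E)$. Each canonical generator of $C^{*}(G)$ lies in some $\mathfrak{A}_n$: each $P_v$ and $S_e$ for $v, e \in E$ sits in the appropriate $B_{n,v}$; each $S_{e_i^{v}}$ with $v \in W_n$ lies in $B_{n,v}$; and $P_{x_i^{v}} = S_{e_i^{v}} S_{e_i^{v}}^{*}$ then lies in the same $B_{n,v}$. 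Hence $\overline{\bigcup_n \mathfrak{A}_n} = C^{*}(G)$, and by the uniqueness of AF limits with a given Bratteli diagram, $C^{*}(G) \cong \mathfrak{A}$.
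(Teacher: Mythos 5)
Your second stage is essentially the paper's own argument: the paper also realizes $C^*(G)$ as the closure of an increasing union of finite-dimensional subalgebras generated by $\{S_\alpha : r_G(\alpha)\in V_n\}$, with the path count $|\{\alpha\in G^* : r_G(\alpha)=v\}|=d_E(v)$ and multiplicity matrices $(|vE^1w|)$, exactly as you do (it simply refers to the proof of Proposition~\ref{t:largestideal}). That part of your proposal is correct.

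The gap is in your first stage. The paper does not prove the existence of a Bratteli diagram with $d_E(v)\geq 2$ and $d_E(v)>\sum_{e\in r_E^{-1}(v)}d_E(s_E(e))$ at every vertex; it imports it from \cite[Lemma~3.5]{kst:realization}. Your replacement is only a sketch (you yourself defer the ``technical heart''), and one of its explicit steps fails: telescoping (``thinning'') an arbitrary Bratteli diagram cannot in general achieve $d_E(v)\geq 2$. Concretely, take the diagram with levels $W_n=\{u_n,v_n\}$, $d(u_n)$ growing, $d(v_n)=1$, one edge $u_n\to u_{n+1}$, one edge $v_n\to u_{n+1}$, and no edges into any $v_{n+1}$. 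This is a Bratteli diagram for $\K$, which has no unital quotients, yet every telescope retains the size-one sources $v_{n_k}$ (nothing ever maps into a $v$-vertex), so no telescope has $d\geq 2$ at all vertices --- even though the strict inequality can easily be arranged at every vertex of the telescope. The point is that the vertices about which your Lemma~\ref{l:bdnonunit}-style diagonal argument says nothing (those receiving no path from the previous kept level, where the inequality is vacuous) are precisely the potential $d=1$ sources, so the two requirements cannot be decoupled as you propose; your remark that non-sources automatically satisfy $d\geq 2$ is correct, but it does not help at sources. Fixing this requires modifying the diagram beyond telescoping (e.g., absorbing small new summands into later vertices, i.e., choosing a different nest of finite-dimensional subalgebras), which is exactly what \cite[Lemma~3.5]{kst:realization} provides. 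As written, the first sentence of the theorem is not established by your argument, while the isomorphism $\mathfrak{A}\cong C^*(G)$ given such a diagram is.
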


\begin{proof}
It follows from \cite[Lemma~3.5]{kst:realization} that $A$ has a Bratteli diagram $(E, d_{E} )$ such that $d_E(v) \geq 2$ and $d_{E} ( v ) > \sum_{ e \in r_{E}^{-1} ( v ) } d_{E} ( s_{E} ( e ) )$ for all $v \in E^{0}$.

Let $E^{0}$ be partitioned into levels as $E^{0} = \bigsqcup_{ n = 1}^{ \infty } V_{n}$, let $\{ S_{e}, P_{v} \}_{ e \in G^{1}, v \in G^{0} }$ be a universal Cuntz-Krieger $G$-family for $C^{*} ( G )$, and let $\mathfrak{A}$ be the AF-algebra associated to $(E,d_{E})$.
Set 
\begin{align*}
\mathfrak{A}_{n} &= C^{*} ( \setof{ S_{ \alpha } }{ r_{G} ( \alpha ) \in V_{n} } ).
\end{align*}
Using an argument similar to the one in the proof of Proposition~\ref{t:largestideal}, we obtain that $\mathfrak{A}_{n}$ is a $C^{*}$-subalgebra of $C^{*} ( G )$ for each $n \in \N$, and the following statements hold:
\begin{itemize}
\item[(1)] $\mathfrak{A}_{n} \subseteq \mathfrak{A}_{n+1}$ for all $n \in \N$;

\item[(2)] $\mathfrak{A}_{n} \cong \bigoplus_{ v \in V_{n} } \mathsf{M}_{ d_{E} (v) }$ for all $n \in \N$;

\item[(3)] $\overline{ \bigoplus_{ n = 1}^{ \infty } \mathfrak{A}_{n} } = C^{*} ( G )$; and 

\item[(4)] the homomorphism $\ftn{ \phi_{n,n+1} }{  \bigoplus_{ v \in V_{n} } \mathsf{M}_{ d_{E} (v) } }{  \bigoplus_{ v \in V_{n+1} } \mathsf{M}_{ d_{E} (v) } }$ given by 
\begin{align*}
 \bigoplus_{ v \in V_{n} } \mathsf{M}_{ d_{E} (v) } \cong \mathfrak{A}_{n} \subseteq \mathfrak{A}_{n+1} \cong  \bigoplus_{ v \in V_{n+1} } \mathsf{M}_{ d_{E} (v) }
\end{align*}
has multiplicity matrix $( | v E^{*} w | )_{ v \in V_{n} , w \in V_{n+1} }$.
\end{itemize}
Hence, 
$$
C^{*} ( G ) = \overline{ \bigcup_{ n = 1}^{ \infty } \mathfrak{A}_{n} } \cong \varinjlim \left(  \bigoplus_{ v \in V_{n} } \mathsf{M}_{ d_{E} (v) } , \phi_{n,n+1} \right) \cong \mathfrak{A}.
$$
\end{proof}

\end{document}